\documentclass[11pt,a4paper]{article}
\usepackage[T1]{fontenc}
\usepackage[utf8]{inputenc}
\usepackage{lmodern,microtype}
\usepackage[margin=27mm]{geometry}
\usepackage{amsmath,amssymb,amsthm,mathtools,enumitem,booktabs}
\usepackage[dvipsnames]{xcolor}
\usepackage{hyperref}
\hypersetup{colorlinks=true,linkcolor=MidnightBlue,citecolor=MidnightBlue,urlcolor=MidnightBlue,
 pdftitle={Near-Horizon Scalar-Wave Sampling: Rindler Bounds and Schwarzschild Comparison},pdfauthor={Anik Chakraborty}}
\setlist[enumerate]{label=(\roman*),leftmargin=*,itemsep=3pt}
\numberwithin{equation}{section}
\newtheorem{theorem}{Theorem}[section]
\newtheorem{proposition}[theorem]{Proposition}
\newtheorem{lemma}[theorem]{Lemma}
\newtheorem{corollary}[theorem]{Corollary}
\theoremstyle{definition}
\newtheorem{definition}[theorem]{Definition}
\theoremstyle{remark}
\newtheorem{remark}[theorem]{Remark}
\newcommand{\R}{\mathbb R}
\newcommand{\C}{\mathbb C}
\newcommand{\N}{\mathbb N}
\newcommand{\Hh}{\mathcal H}
\newcommand{\Oo}{\mathcal O}
\newcommand{\Aa}{\mathcal A}
\newcommand{\Ee}{\mathcal E}
\newcommand{\Dom}{\mathcal D}
\newcommand{\ind}{\mathbf 1}

\newcommand{\ip}[2]{\left\langle #1,#2\right\rangle}
\DeclareMathOperator{\supp}{supp}
\DeclareMathOperator{\ran}{ran}
\DeclareMathOperator{\cond}{cond}
\DeclareMathOperator{\essinf}{ess\,inf}
\DeclareMathOperator{\esssup}{ess\,sup}

\title{Near-Horizon Scalar-Wave Sampling:\\Rindler Bounds and Schwarzschild Comparison}
\author{Anik Chakraborty\\[3pt]
\small Department of Mathematics, University of Delhi, India\\
\small\texttt{achakraborty@maths.du.ac.in}}
\date{\today}
\begin{document}
\maketitle
\begin{abstract}
We study classical reconstruction of an outgoing scalar-wave component from radial-shell observations. Starting from the noncompact-time Rindler metric, we construct Klein--Gordon evolution by closed quadratic forms and reduce the massless characteristic dynamics to translation in optical distance. For an initial optical interval of length $L$, a shell of width $d$, and recording time $T$, the optimal squared observability bounds are $A(T,L)=\min\{d,(T-L)_+\}$ and $B(T,L)=\min\{d,T\}$. At $T=L$, the Rindler observation map is injective with nonclosed range. Discrete shell observations form a spatial fusion frame precisely when their transported regions cover the initial interval; we compute optimal uniform-grid bounds and characterize robustness under profile erasures. Finite-resolution reconstruction separates localization error, approximation error and noise amplification. For fixed Schwarzschild mass and angular momentum, we compare the scalar-wave observation operator with its Rindler limit. Anchored finite-energy preparation permits arbitrary prescribed outgoing $L^2([-L,0])$ profiles without a zero-integral condition. The operator error is explicitly $O(e^{-2\kappa a})$ as an apparatus of fixed optical size is translated toward the horizon; whenever $T>L$, positive Rindler bounds transfer to Schwarzschild for all sufficiently large $a$. A positive Schwarzschild lower bound is impossible at or below the optical travel time, for every translation. These results concern classical observation and reconstruction, not quantum information or dissipative horizon evolution.
\end{abstract}
\noindent\textbf{MSC 2020:} Primary 35L05, 93B07; Secondary 42C15, 83C57.\\
\textbf{Keywords:} scalar waves; observability; dynamical sampling; fusion frames; Rindler geometry; Schwarzschild exterior; regularization.

\section{Introduction}\label{sec:intro}
Reconstructing a wave from measurements requires a choice of state space, detector and recording interval. Even when the underlying evolution conserves energy, a restricted observation can have a kernel or an unbounded inverse. Near a horizon, the distinction is particularly useful: a long optical travel time can obstruct observation by a prescribed apparatus without implying any failure of the wave evolution itself.

We consider a massless scalar field on a Rindler wedge and measure its outgoing characteristic component on a fixed radial shell. The unknown initial component is supported below that shell. In the optical coordinate, the wave travels at unit speed, and the observation Gramian records the time each characteristic spends in the detector. This gives exact stability constants and a sharp recording-time threshold. The same calculation has a discrete version in which transported observation regions form a fusion frame. Its redundancy has a concrete meaning: repeated coverage of an initial spatial point protects that part of the state against erased observations.

The relation between operator-generated frames and dynamical sampling was developed in \cite{ACMT,ACMPT}; bounded operator representations and their duality and stability properties are studied in \cite{CH,CHP}. The observability interpretation is established explicitly by D\'iaz Mart\'in, Medri and Molter \cite{DMMcontrol}, whose continuous-time work also addresses discretization \cite{DMMcontinuous}. Our finite-channel obstructions are auxiliary consequences of that established viewpoint. The orbit and fusion-orbit perspective in Osgooei and Abdollahpour \cite{OA} is useful motivation, but the contraction estimate and all reconstruction identities used here are proved directly. Frames of subspaces and their dual synthesis maps are used in the sense of \cite{CK,HMBZ}.

The transport mechanism itself is also established. Vaidya, Rajaram and Dasgupta obtain multiplication-operator observability Gramians for advection equations \cite{VRD}. The continuous Gramian below is a one-dimensional specialization of this mechanism, followed by evaluation of its essential extrema in Rindler optical coordinates. More general geometric conditions for wave observation are provided by Bardos, Lebeau and Rauch \cite{BLR} and, for time-dependent observation domains, by Le Rousseau, Lebeau, Terpolilli and Tr\'elat \cite{LRLT}. The frame--observability correspondence and the underlying geometric-control principles are established tools in this setting.

Quantitative degeneration near the minimal observation time and dependence on lower-order potentials are studied by Laurent and L\'eautaud \cite{LL}. The critical-time packet argument below is a direct instance of this established obstruction mechanism for our prepared characteristic data.

The principal additional step is a quantitative comparison with the actual Schwarzschild scalar equation at fixed angular momentum. Both the initial region and the detector are translated toward the horizon, with their optical sizes and the recording time fixed. We use finite-energy data whose displacement is anchored to zero on the outer side. This permits every outgoing $L^2$ profile on the prescribed interval; its integral need not vanish. An exponential bound on the scalar potential gives an explicit observation-operator error and stable reconstruction for sufficiently near-horizon apparatus. We also prove that a positive lower bound is impossible at or below the optical travel time, for every translation. These statements are local in the apparatus and finite in time; they are distinct from global black-hole wave decay estimates such as those discussed in \cite{DR}.

Free characteristic comparison at the Schwarzschild horizon and finite-energy scattering constructions are already present in \cite{Dimock,Nicolas}. In particular, the Cook argument in \cite[Appendix A]{Nicolas} uses the exponential decay of the horizon potential. Our comparison concerns an explicit finite-time observation-operator norm on a fixed interval of anchored derivative profiles and the resulting reconstruction bounds.

The geometry is introduced through the field equation, rather than through an imposed horizon-vanishing multiplier. The classical Rindler setup underlying the field analysis is familiar from \cite{Fulling}, and self-adjoint spatial operators provide a systematic construction of static scalar dynamics \cite{IW}. Here the propagators are conservative. A separate time-dependent detector response is considered in Section~\ref{sec:gain}; its attenuation is not a property of the propagator.

The paper provides exact continuous and discrete observation bounds, sharp whole-profile erasure criteria, and regularized reconstruction estimates in Rindler optical coordinates. The main comparison, Lemma~\ref{lem:energycompletion} and Theorem~\ref{thm:Schwarzcomparison}, treats every prescribed outgoing profile on a fixed interval through anchored finite-energy data. It gives an explicit finite-time observation-operator estimate and transferred reconstruction bounds for a fixed angular mode and a translated apparatus of fixed optical size. The specific contribution lies in this quantitative observation formulation of the established comparison mechanisms.

\paragraph{Conventions.}
All Hilbert spaces are complex and separable; inner products are linear in the first variable. Functions on a specified initial interval are extended by zero unless stated otherwise. Endpoints of intervals do not affect $L^2$ identities. Bounded operators have their entire stated spaces as domains and are closed. All reconstruction limits are in Hilbert-space norm, unless operator-norm convergence is specified. We write $x_+=\max\{x,0\}$, and use units in which the propagation speed is one.

\section{Rindler geometry and scalar-wave evolution}\label{sec:rindler}
\subsection{Metric, optical distance and the spatial operator}
Fix $\kappa,\rho_0>0$ and use the metric
\begin{equation}\label{eq:metric}
g=-(\kappa\rho)^2\,d\tau^2+d\rho^2,
\qquad (\tau,\rho)\in\R\times(0,\infty).
\end{equation}
The coordinate $\tau$ is not periodic. With
\begin{equation}\label{eq:optical}
s=\kappa^{-1}\log(\rho/\rho_0),\qquad
ds=\frac{d\rho}{\kappa\rho},
\end{equation}
the horizon corresponds to $s=-\infty$ and the metric is
$(\kappa\rho_0)^2e^{2\kappa s}(-d\tau^2+ds^2)$.

\begin{proposition}[Klein--Gordon realization]\label{prop:KG}
For $m\ge0$, the equation $(\Box_g-m^2)u=0$ is
\begin{equation}\label{eq:rhoKG}
u_{\tau\tau}+\mathcal L_{\rho,m}u=0,
\qquad
\mathcal L_{\rho,m}=-\kappa^2\rho\partial_\rho(\rho\partial_\rho)
+m^2\kappa^2\rho^2.
\end{equation}
The coordinate map is unitary from
$\Hh_\rho=L^2((0,\infty),d\rho/(\kappa\rho))$ onto $L^2(\R,ds)$.
In the latter space define
\begin{align}
V_m(s)&=m^2\kappa^2\rho_0^2e^{2\kappa s},\label{eq:masspotential}\\
a_m[u,v]&=\int_\R \big(u'\overline{v'}+V_m u\overline v\big)\,ds,\qquad
\Dom(a_m)=\{u\in H^1(\R):\sqrt{V_m}u\in L^2\}.
\end{align}
This is a densely defined, closed, nonnegative form. Its associated operator is the nonnegative self-adjoint Friedrichs realization
\begin{equation}\label{eq:Ldomain}
L_m=-\partial_s^2+V_m,\qquad
\Dom(L_m)=\{u\in\Dom(a_m):-u''+V_m u\in L^2\text{ in distributions}\}.
\end{equation}
It is closed and unbounded. Its square root $H_m=L_m^{1/2}$ is self-adjoint with
$\Dom(H_m)=\Dom(a_m)$, and $e^{-itH_m}$ is a strongly continuous unitary group on $L^2(\R)$.
\end{proposition}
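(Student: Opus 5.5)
We treat the statement in three pieces: the coordinate computation of the field equation, the unitary change of variables, and the form-theoretic construction of $L_m$, $H_m$ and the group.

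\textbf{Field equation.} For the metric \eqref{eq:metric}, $\sqrt{|g|}=\kappa\rho$, $g^{\tau\tau}=-(\kappa\rho)^{-2}$ and $g^{\rho\rho}=1$. Hence
\[
\Box_g u=\frac{1}{\kappa\rho}\Big(-\partial_\tau\big(\kappa\rho(\kappa\rho)^{-2}u_\tau\big)+\partial_\rho(\kappa\rho\, u_\rho)\Big)
=-(\kappa\rho)^{-2}u_{\tau\tau}+\rho^{-1}\partial_\rho(\rho u_\rho).
\]
Multiplying $(\Box_g-m^2)u=0$ by $-(\kappa\rho)^2$ gives \eqref{eq:rhoKG}.

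\textbf{Unitary change of variables.} The map $\rho=\rho_0e^{\kappa s}$ is a diffeomorphism $\R\to(0,\infty)$ with $d\rho/(\kappa\rho)=ds$. Composition $(Uf)(s)=f(\rho_0e^{\kappa s})$ is therefore isometric from $\Hh_\rho$ onto $L^2(\R,ds)$, and its inverse is composition with $s(\rho)$. The relation $\kappa\rho\partial_\rho=\partial_s$ shows that $\mathcal L_{\rho,m}$ is conjugated to $-\partial_s^2+V_m$, with $V_m$ as in \eqref{eq:masspotential}.

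\textbf{The form $a_m$.} The form is nonnegative because $V_m\ge0$. It is densely defined because $C_c^\infty(\R)\subset\Dom(a_m)$. Closedness is the main technical point, though still routine. Let $u_n$ be Cauchy in the form norm $\|u\|_{a}^2=a_m[u,u]+\|u\|^2$. Then $u_n\to u$ in $H^1$ and $\sqrt{V_m}u_n\to w$ in $L^2$. Passing to an a.e.-convergent subsequence gives $w=\sqrt{V_m}u$. Hence $u\in\Dom(a_m)$ and $u_n\to u$ in form norm.

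\textbf{The operator $L_m$.} By the representation theorem (Kato), there is a unique nonnegative self-adjoint $L_m$ with $\Dom(L_m)\subset\Dom(a_m)$ and $a_m[u,v]=\langle L_mu,v\rangle$ for all $v\in\Dom(a_m)$. To identify the domain as \eqref{eq:Ldomain}, test against $v\in C_c^\infty$; this gives $L_mu=-u''+V_mu$ in distributions. Conversely, take $u\in\Dom(a_m)$ with $f:=-u''+V_mu\in L^2$. Then $a_m[u,v]=\langle f,v\rangle$ holds for $v\in C_c^\infty$, and we need it on all of $\Dom(a_m)$. This requires density of $C_c^\infty$ in $\Dom(a_m)$ for the form norm. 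We prove it by truncation $\chi(\cdot/R)u$ followed by mollification. The truncation step is dominated by $\|u\|_{H^1}$ and $\|\sqrt{V_m}u\|$. The mollification step works on compact sets, where $V_m$ is bounded.

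Since the form is already closed on this core, $L_m$ is the Friedrichs extension of $-\partial_s^2+V_m$ on $C_c^\infty$. It is unbounded: already for $m=0$ the spectrum is $[0,\infty)$, via Fourier transform, and in general $V_m\ge0$ gives $a_m\ge a_0$ on the quadratic form of $H^1$-functions with large oscillation. Concretely, take $u_k=e^{iks}\phi$ with $\phi\in C_c^\infty$. Then $a_m[u_k]\sim k^2\|\phi\|^2\to\infty$ with $\|u_k\|$ fixed, so the closed operator $L_m$ cannot be bounded. Closedness of $L_m$ follows from self-adjointness.

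\textbf{The square root and the group.} Let $H_m=L_m^{1/2}$, defined by spectral calculus. It is nonnegative and self-adjoint, and Kato's second representation theorem gives $\Dom(H_m)=\Dom(a_m)$ with $\|H_mu\|^2=a_m[u,u]$. Stone's theorem then yields that $e^{-itH_m}$ is a strongly continuous unitary group on $L^2(\R)$.

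The only step needing care is the core/density argument used to match the abstract Friedrichs domain with the distributional description.
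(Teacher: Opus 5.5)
Your proposal is correct and follows essentially the same route as the paper's proof. Both use the divergence formula for $\Box_g$, unitarity from $ds=d\rho/(\kappa\rho)$, closedness via almost-everywhere convergent subsequences, and a cutoff-then-mollification argument making $C_c^\infty$ a form core, which identifies the Friedrichs domain; both then obtain unboundedness from the test functions $e^{iks}\phi$ and use spectral calculus with Stone's theorem for $H_m$ and $e^{-itH_m}$, and your explicit appeal to Kato's second representation theorem for $\Dom(H_m)=\Dom(a_m)$ fills in a step the paper leaves implicit.
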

\begin{proof}
Since $\sqrt{|\det g|}=\kappa\rho$, the divergence formula gives
\[
\Box_g u=-\frac{u_{\tau\tau}}{\kappa^2\rho^2}
+\frac1\rho\partial_\rho(\rho\partial_\rho u).
\]
Multiplication by $\kappa^2\rho^2$ proves \eqref{eq:rhoKG}.
Equation~\eqref{eq:optical} proves unitarity of the coordinate change, and
$\partial_s=\kappa\rho\partial_\rho$ gives \eqref{eq:masspotential}.

The form contains $C_c^\infty(\R)$. A Cauchy sequence for
$\|u\|_2^2+a_m[u,u]$ converges in $H^1$ and its products with $\sqrt{V_m}$ converge in $L^2$. Almost-everywhere subsequences identify the latter limit with the product of the former limit. Thus the form is closed. Cutoffs followed by smoothing show that $C_c^\infty$ is a form core: the potential tail is integrable for a form-domain element, and the potential is bounded on every fixed compact set. The representation theorem for closed nonnegative forms gives a nonnegative self-adjoint operator. Testing against $C_c^\infty$ identifies its distributional action; form-core density then gives precisely \eqref{eq:Ldomain}.

For a fixed nonzero compactly supported smooth $\chi$, the form values of $e^{ins}\chi$ grow quadratically in $n$. Consequently $H_m$, and hence $L_m$, is unbounded. The spectral theorem defines the square root and the unitary group, with adjoint $e^{itH_m}$; Stone's theorem identifies its generator as $-iH_m$. The radial realization is the unitary pullback of \eqref{eq:Ldomain}. No finite endpoint or horizon boundary condition is introduced.
\end{proof}

The measure in $\Hh_\rho$ is the spatial measure divided by the lapse. It is not the spacetime volume measure. Changing it to Lebesgue radial measure by a unitary field rescaling would also change the represented operator; this does not introduce attenuation of the invariant norm.

\begin{proposition}[Wave dynamics and energy]\label{prop:waveenergy}
For $u_0\in\Dom(H_m)$ and $v_0\in L^2(\R)$, spectral calculus defines the weak wave solution
\begin{equation}\label{eq:wavecalc}
u(t)=\cos(tH_m)u_0+b_t(H_m)v_0,\qquad
b_t(\lambda)=\begin{cases}\sin(t\lambda)/\lambda,&\lambda>0,\\t,&\lambda=0.\end{cases}
\end{equation}
It belongs to $C(\R;\Dom(H_m))\cap C^1(\R;L^2)$, with the form graph norm on the first space, and solves the equation in the form-domain dual. Its conserved energy is
\begin{align}
E_m[u](t)&=\frac12\big(\|u_t(t)\|_2^2+\|H_mu(t)\|_2^2\big)\label{eq:energy}\\
&=\frac12\int_0^\infty
\left(\frac{|u_\tau|^2}{\kappa\rho}
+\kappa\rho|u_\rho|^2+m^2\kappa\rho|u|^2\right)d\rho.
\end{align}
The wave evolution extends to a strongly continuous unitary group on the completion of $\Dom(H_m)\times L^2$ for the norm $(2E_m)^{1/2}$.
\end{proposition}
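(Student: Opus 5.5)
The plan is to work entirely in the spectral representation of $H_m$ from Proposition~\ref{prop:KG}. By the spectral theorem, $L^2(\R)$ is unitarily equivalent to a direct integral (or multiplication-operator model) $L^2(\sigma,\mu)$ in which $H_m$ acts as multiplication by $\lambda\ge0$. Then $\cos(tH_m)$ and $b_t(H_m)$ are multiplication by the functions $\cos(t\lambda)$ and $b_t(\lambda)$. Both are bounded by $1$ and $|t|$ respectively, and both are jointly continuous in $(t,\lambda)$. Hence $u(t)$ is well defined in $L^2$ for all $u_0,v_0\in L^2$.

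\textbf{Regularity.} The function $\lambda b_t(\lambda)=\sin(t\lambda)$ is bounded, so $b_t(H_m)v_0\in\Dom(H_m)$. Also $\lambda\cos(t\lambda)u_0$ is square integrable when $\lambda u_0$ is. This gives $u(t)\in\Dom(H_m)$, and $\Dom(H_m)=\Dom(a_m)$ carries the form graph norm $\|u\|^2+\|H_mu\|^2=\|u\|^2+a_m[u,u]$. Continuity of $t\mapsto u(t)$ in this norm follows by dominated convergence applied to $(1+\lambda^2)|\cos(t\lambda)-\cos(t_0\lambda)|^2|\hat u_0|^2$, and similarly for the $b_t$ term.

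\textbf{Time derivative and the weak equation.} Differentiating in $t$ gives
\[
u_t=-\lambda\sin(t\lambda)\hat u_0+\cos(t\lambda)\hat v_0 .
\]
To justify this in $L^2$, use difference quotients: $|(\cos((t+h)\lambda)-\cos(t\lambda))/h|\le\lambda$, so dominated convergence applies because $\lambda\hat u_0\in L^2$. The same argument gives $u\in C^1(\R;L^2)$. For the equation, take $\phi\in\Dom(a_m)$. Then $\tfrac{d}{dt}\langle u_t,\phi\rangle=-a_m[u(t),\phi]$, which one checks in the spectral model as $\int(-\lambda^2\cos(t\lambda)\hat u_0-\lambda\sin(t\lambda)\hat v_0)\bar{\hat\phi}\,d\mu$. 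Since $\lambda\hat\phi\in L^2$, this is exactly $-\langle H_mu,H_m\phi\rangle$. So $u_{tt}+L_mu=0$ holds in $\Dom(a_m)^*$.

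\textbf{Energy conservation.} Pointwise in $\lambda$,
\[
|\lambda\hat u(t)|^2+|\hat u_t(t)|^2=|\lambda\hat u_0|^2+|\hat v_0|^2,
\]
by $\cos^2+\sin^2=1$; the cross terms cancel. This is the rotation identity on $(\lambda\hat u,\hat u_t)$. Integrating gives conservation of $E_m$.

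\textbf{Coordinate form of the energy.} Write $\|H_mu\|^2=a_m[u,u]=\int(|u_s|^2+V_m|u|^2)\,ds$ and $\|u_t\|^2=\int|u_\tau|^2\,ds$ ($t=\tau$). Substitute $ds=d\rho/(\kappa\rho)$, $u_s=\kappa\rho u_\rho$ and $V_m=m^2\kappa^2\rho^2$. This gives the integrands $|u_\tau|^2/(\kappa\rho)$, $\kappa\rho|u_\rho|^2$ and $m^2\kappa\rho|u|^2$, as claimed.

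\textbf{Extension to the energy space (main obstacle).} This is the delicate step, because $(2E_m)^{1/2}$ is only a seminorm-like quantity when $m=0$ or near $\lambda=0$: it does not control $\|u_0\|$. I would define the map
\[
U(t):(u_0,v_0)\mapsto(u(t),u_t(t)).
\]
In the spectral model it acts on $(\lambda\hat u_0,\hat v_0)$ as the rotation matrix
\[
\begin{pmatrix}\cos t\lambda&\sin t\lambda\\-\sin t\lambda&\cos t\lambda\end{pmatrix}.
\]
So $U(t)$ is isometric for the energy norm. It is a norm, not merely a seminorm, provided $\ker H_m=\{0\}$, which I would check. For $m>0$ this holds because $V_m>0$. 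For $m=0$, an $L^2$ function with $u'=0$ vanishes.

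Each $U(t)$ is therefore isometric on a dense subspace. Its inverse $U(-t)$ preserves the same subspace, so $U(t)$ extends to a unitary on the completion. The group law $U(t+s)=U(t)U(s)$ follows from the addition formulas for rotations. Strong continuity holds first on the dense subspace, by the continuity already shown, and then everywhere by uniform boundedness of the extensions. Concretely, the completion is identified with $L^2(\mu)\oplus L^2(\mu)$ via $(u_0,v_0)\mapsto(\lambda\hat u_0,\hat v_0)$. The image of this map is dense because $\ker H_m=0$, so the range of $H_m$ is dense.
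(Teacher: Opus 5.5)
Your proposal is correct and follows essentially the same route as the paper. Both arguments use dominated convergence in the spectral representation for regularity, the rotation acting on $(H_m u,u_t)$ for energy conservation and the unitary extension, and triviality of $\ker H_m$ to show that the completed coordinates fill $L^2\oplus L^2$. The only differences are minor: you verify the weak equation by a direct spectral computation, while the paper uses spectral truncation. The paper also uses that truncation to note uniqueness, which your outline omits but the statement does not explicitly require.
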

\begin{proof}
The estimates $|b_t(\lambda)|\le |t|$ and $|\lambda b_t(\lambda)|\le1$ give the stated regularity by dominated convergence in the spectral representation. In the variables $(p,v)=(H_mu,u_t)$, evolution is
\begin{equation}\label{eq:rotation}
\begin{pmatrix}p(t)\\v(t)\end{pmatrix}
=\begin{pmatrix}\cos(tH_m)&\sin(tH_m)\\-\sin(tH_m)&\cos(tH_m)\end{pmatrix}
\begin{pmatrix}H_mu_0\\v_0\end{pmatrix}.
\end{equation}
The matrix is unitary by spectral calculus, proving energy conservation and the completion statement. Its closed skew-adjoint generator is
$\left(\begin{smallmatrix}0&H_m\\-H_m&0\end{smallmatrix}\right)$ on
$\Dom(H_m)\oplus\Dom(H_m)$. Indeed, the corresponding constant matrix diagonalization reduces it to $iH_m$ and $-iH_m$. There is no $L^2$ kernel of $H_m$: zero form energy forces a constant function, which cannot be a nonzero element of $L^2(\R)$. Thus $\ran H_m$ is dense and the completed coordinates in \eqref{eq:rotation} fill $L^2\oplus L^2$. The radial energy identity follows by substitution in \eqref{eq:optical}. Spectral truncation proves the weak equation and uniqueness first on a spectral core and then by the displayed continuous energy evolution.
\end{proof}

\subsection{Massless characteristic data}
\begin{proposition}[Conservative characteristic evolution]\label{prop:characteristics}
For $m=0$, let $\dot H^1(\R)$ denote the completion of $C_c^\infty(\R)$ in the derivative norm, represented by locally $H^1$ functions modulo constants. On the homogeneous wave-energy space $\dot H^1\times L^2$, set
\begin{equation}\label{eq:qdef}
q_+=\frac{u_t-u_s}{2},\qquad q_-=\frac{u_t+u_s}{2}.
\end{equation}
Then
\begin{equation}\label{eq:qtranslation}
q_+(t,s)=q_+(0,s-t),\qquad q_-(t,s)=q_-(0,s+t),
\qquad E_0=\|q_+\|_2^2+\|q_-\|_2^2.
\end{equation}
The outgoing group $V(t)f(s)=f(s-t)$ equals $e^{-itD}$, where
$D=-i\partial_s$ is self-adjoint on $H^1(\R)\subset L^2(\R)$.
\end{proposition}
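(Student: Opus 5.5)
The plan is to verify the translation law first on smooth compactly supported data, where the massless equation is classical, and then extend to the whole energy space by density and energy conservation from Proposition~\ref{prop:waveenergy}.

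\textbf{Smooth data.} For $m=0$, Proposition~\ref{prop:KG} gives $V_0\equiv0$, so $L_0=-\partial_s^2$ on $H^2(\R)$. Take $(u_0,v_0)\in C_c^\infty\times C_c^\infty$. Since $u_{tt}=u_{ss}$, d'Alembert's formula gives a classical solution $u(t,s)=F(s-t)+G(s+t)$, with $F,G$ smooth and determined up to a shared constant. This solution coincides with the spectral solution \eqref{eq:wavecalc}, by the uniqueness clause of Proposition~\ref{prop:waveenergy}; the homogeneous space only sees $u$ modulo constants, so the shared constant is harmless. Directly from the definitions \eqref{eq:qdef} one computes $q_+=-F'(s-t)$ and $q_-=G'(s+t)$, which is \eqref{eq:qtranslation}. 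Equivalently, $(\partial_t+\partial_s)q_+=\tfrac12(u_{tt}-u_{ss})=0$, and similarly $(\partial_t-\partial_s)q_-=0$. For the energy, expand
\[
|q_+|^2+|q_-|^2=\tfrac12(|u_t|^2+|u_s|^2).
\]
Integrating in $s$ gives $E_0=\|q_+\|_2^2+\|q_-\|_2^2$, because $\|H_0u\|^2=\|u_s\|^2$.

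\textbf{Extension to the energy space.} Consider the map $(u,u_t)\mapsto(q_+,q_-)$ from $\dot H^1\times L^2$ to $L^2\oplus L^2$. By the identity above it is an isometry for the norm $E_0^{1/2}$, up to a factor. It is also onto, since $u_s=q_--q_+$ and $u_t=q_++q_-$ range over all of $L^2\oplus L^2$. Every element of $L^2$ is the derivative of some element of $\dot H^1$, because $C_c^\infty$ derivatives are dense in $L^2$. The wave group extends unitarily to this completion by Proposition~\ref{prop:waveenergy}, which uses the homogeneous norm. Translations are also unitary on $L^2$. Two bounded maps agreeing on a dense set agree everywhere, so \eqref{eq:qtranslation} holds on the full space.

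\textbf{The group identity.} By Fourier transform, $D=-i\partial_s$ on $H^1$ is unitarily equivalent to multiplication by $\xi$ on its maximal domain, and is therefore self-adjoint. Then $e^{-itD}$ is multiplication by $e^{-it\xi}$, which is translation by $t$: $\widehat{f(\cdot-t)}=e^{-it\xi}\hat f$. Alternatively, Stone's theorem applies: $V(t)$ is strongly continuous by continuity of translation in $L^2$, and its generator on $H^1$ is $-\partial_s=-iD$.

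The main obstacle is the interface between the homogeneous space $\dot H^1$ (functions modulo constants) and the spectral construction of Proposition~\ref{prop:waveenergy}, which is posed on $\Dom(H_0)\subset L^2$. I would handle this by working only with $(u_s,u_t)$. These are constant-independent quantities, and the completion statement in Proposition~\ref{prop:waveenergy} identifies $(H_0u,u_t)$ with $L^2\oplus L^2$. Here $H_0u$ corresponds to $u_s$ under the Hilbert-transform-type unitary $|D|\mapsto D$, namely multiplication by $\operatorname{sgn}\xi$, which commutes with the evolution.
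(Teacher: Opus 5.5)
Your proposal is correct and follows essentially the same route as the paper. You verify the transport law on smooth data, via d'Alembert where the paper uses $(\partial_t\pm\partial_s)q_\pm=0$ along characteristics; you obtain $E_0=\|q_+\|_2^2+\|q_-\|_2^2$ from the parallelogram identity; you extend by density using that the derivative map identifies $\dot H^1$ with $L^2$; and you identify $V(t)=e^{-itD}$ by Fourier transform. The one step you assert rather than prove is the density of $\{\phi':\phi\in C_c^\infty\}$ in $L^2$, which the paper justifies in one line: an $L^2$ function orthogonal to all such derivatives has zero distributional derivative, so it is an $L^2$ constant and hence zero.
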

\begin{proof}
The derivative map identifies $\dot H^1$ with $L^2$. Its range is dense because an $L^2$ function orthogonal to all derivatives of test functions has distributional derivative zero and is therefore an $L^2$ constant, hence zero. Primitives represent the completion modulo constants. The identity for energy follows from the parallelogram law. For smooth solutions, $u_{tt}=u_{ss}$ gives
$(\partial_t+\partial_s)q_+=0$ and $(\partial_t-\partial_s)q_-=0$.
Characteristics give \eqref{eq:qtranslation}; density extends it to the energy space. Fourier transformation makes $D$ multiplication by the real variable $\xi$, with domain $\{f:\xi\widehat f\in L^2\}=H^1$. Its spectral exponential is the asserted translation.
\end{proof}

Here and below $t=\tau$. The positive-frequency group generated by $H_0=|D|$ and the outgoing group generated by $D$ are different. The latter propagates one characteristic energy component. Neither the full wave evolution nor this outgoing group is strictly contractive.

\section{Observation operators and general obstructions}\label{sec:framework}
\subsection{Analysis operators, frames and reconstruction}
\begin{definition}\label{def:observation}
Let $U(t)$ be a strongly continuous unitary group on a Hilbert space $\Hh$, let $J:X\to\Hh$ be an isometric embedding, and let $C:\Hh\to Y$ be bounded. For $0<T<\infty$ the observation map is
\[
\Oo f(t)=CU(t)Jf\in L^2(0,T;Y).
\]
It is bounded with $\|\Oo\|\le\sqrt T\|C\|$. Its Gramian is
$S=\Oo^*\Oo=\int_0^T J^*U(t)^*C^*CU(t)J\,dt$, where the integral is understood weakly. A positive lower observation bound is a constant $A>0$ such that
$A\|f\|^2\le\|\Oo f\|^2$ for all $f\in X$.
\end{definition}

With an orthonormal basis $(e_j)$ of $Y$, put
$\phi_{j,t}=J^*U(t)^*C^*e_j$. Parseval and Tonelli give
\begin{equation}\label{eq:frameobserv}
\|\Oo f\|^2=\int_0^T\sum_j|\ip{f}{\phi_{j,t}}|^2dt.
\end{equation}
Thus upper and lower observation bounds are exactly the Bessel and frame inequalities for this semi-continuous family. This is the standard correspondence of \cite{DMMcontrol,DMMcontinuous}. No point-evaluation functional on $L^2$ is used in \eqref{eq:frameobserv}.

\begin{lemma}[Dual reconstruction and perturbation]\label{lem:inverse}
Let $\Aa:X\to Y$ be bounded, with optimal squared bounds $0<A\le B<\infty$. Then its range is closed and
\begin{equation}\label{eq:canonical}
R=(\Aa^*\Aa)^{-1}\Aa^*,\qquad
R\Aa=I,\qquad \|R\|=A^{-1/2},\qquad
\cond(\Aa):=\|\Aa\|\|R\|=\sqrt{B/A}.
\end{equation}
All bounded left inverses are
\begin{equation}\label{eq:allleft}
R+K(I-\Aa R),\qquad K\in\mathcal B(Y,X).
\end{equation}
If $\widetilde\Aa:X\to Y$ satisfies $\|\widetilde\Aa-\Aa\|\le h<\sqrt A$, then valid bounds for it are
\begin{equation}\label{eq:perturb}
(\sqrt A-h)^2\|f\|^2\le\|\widetilde\Aa f\|^2
\le(\sqrt B+h)^2\|f\|^2.
\end{equation}
Its canonical inverse is $(\widetilde\Aa^*\widetilde\Aa)^{-1}\widetilde\Aa^*$.
\end{lemma}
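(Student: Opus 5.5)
The plan is to work throughout with the positive self-adjoint Gramian $G=\Aa^*\Aa$. The optimal bounds say exactly that $A\|f\|^2\le\ip{Gf}{f}\le B\|f\|^2$, with $A=\inf\sigma(G)$ and $B=\sup\sigma(G)=\|\Aa\|^2$. Since $A>0$, $G$ is invertible with $\|G^{-1}\|=A^{-1}$. Closedness of the range comes from the lower bound: if $\Aa f_n\to y$, then $\|f_n-f_m\|\le A^{-1/2}\|\Aa f_n-\Aa f_m\|$, so $(f_n)$ is Cauchy and its limit $f$ satisfies $\Aa f=y$.

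\textbf{The canonical inverse.} Setting $R=G^{-1}\Aa^*$ gives $R\Aa=G^{-1}G=I$ at once. For the norm, I will compute
\[
RR^*=G^{-1}\Aa^*\Aa G^{-1}=G^{-1},
\]
so $\|R\|^2=\|G^{-1}\|=A^{-1}$. Combining this with $\|\Aa\|=B^{1/2}$ gives $\cond(\Aa)=\sqrt{B/A}$.

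\textbf{All bounded left inverses.} First, any operator of the form $R+K(I-\Aa R)$ is a left inverse, because $(I-\Aa R)\Aa=\Aa-\Aa=0$. Conversely, let $R'$ be any bounded left inverse and put $K=R'$. Then
\[
R'(I-\Aa R)=R'-R'\Aa R=R'-R,
\]
so $R'=R+K(I-\Aa R)$. Thus the parametrization is exact. I will also note that $\Aa R$ is the orthogonal projection onto $\ran\Aa$, although the argument does not need this.

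\textbf{Perturbation.} The bounds follow from the triangle inequality:
\[
\|\widetilde\Aa f\|\ge\|\Aa f\|-\|(\widetilde\Aa-\Aa)f\|\ge(\sqrt A-h)\|f\|,
\]
and similarly $\|\widetilde\Aa f\|\le(\sqrt B+h)\|f\|$. Squaring is legitimate because $\sqrt A-h>0$. Applying the first part of the lemma to $\widetilde\Aa$ then shows that $\widetilde\Aa^*\widetilde\Aa$ is invertible, so its canonical inverse is $(\widetilde\Aa^*\widetilde\Aa)^{-1}\widetilde\Aa^*$.

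The only delicate point is that the first part uses optimal bounds, whereas the perturbed bounds are merely valid. The argument for $\widetilde\Aa$ needs only some positive lower bound to get invertibility of the Gramian. With the optimal constants for $\widetilde\Aa$, which are at least as good as the valid ones, the same computations give $\|\widetilde R\|\le(\sqrt A-h)^{-1}$.
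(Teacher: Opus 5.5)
Your proof is correct and follows the paper's argument essentially step for step. Both use Cauchy preimages for closed range, the identity $RR^*=(\Aa^*\Aa)^{-1}$ for $\|R\|=A^{-1/2}$, the decomposition $R'=R+R'(I-\Aa R)$ for the left inverses, and the triangle and reverse triangle inequalities for the perturbation bounds. Your closing remark, that the optimal constants of $\widetilde\Aa$ dominate the valid ones and so give $\|\widetilde R\|\le(\sqrt A-h)^{-1}$, is a correct small addition that the paper only uses implicitly later.
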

\begin{proof}
The lower bound implies that a Cauchy sequence of images has Cauchy preimages, proving closed range. With $S=\Aa^*\Aa$, $AI\le S\le BI$, so $S^{-1}$ exists and $R\Aa=I$. Since $RR^*=S^{-1}$, its norm is $A^{-1/2}$. Moreover $\Aa R$ is the orthogonal projection onto $\ran\Aa$. Every map in \eqref{eq:allleft} is a left inverse; conversely, a left inverse $L$ satisfies $L=R+L(I-\Aa R)$. Triangle and reverse triangle inequalities prove \eqref{eq:perturb}.
\end{proof}

If the observation map in \eqref{eq:frameobserv} has a positive lower bound, its Gramian $S$ is boundedly invertible and the canonical dual vectors are $S^{-1}\phi_{j,t}$. The reconstruction
\[
f=\sum_j\int_0^T\ip{f}{\phi_{j,t}}S^{-1}\phi_{j,t}\,dt
\]
means the norm limit of the bounded synthesis map on finite spatial truncations. These Bochner integrals and their limit are justified by the Bessel bound. Exact reconstruction is an identity for noiseless data; stable reconstruction also requires boundedness of its inverse. A regularized reconstruction will instead involve an explicitly stated cutoff or approximation space.

\subsection{What finitely many bounded channels cannot observe}
The next statements are standard compactness and tail arguments. They delimit the role of operator orbits; they do not describe attenuation by the Rindler metric. Related frame obstructions occur in \cite{ACMPT,DMMcontrol,DMMcontinuous}. The strict-contraction upper estimate also appears in \cite[Proposition 2.1]{OA}.

\begin{proposition}[Finite-channel obstructions]\label{prop:obstructions}
Let $\Hh$ be infinite dimensional.
\begin{enumerate}
\item If $T\in\mathcal B(\Hh)$, $\|T\|\le q<1$, and $\phi_1,\ldots,\phi_J\in\Hh$, then
$\Aa f=(\ip{f}{T^k\phi_j})_{j,k\ge0}$ is Hilbert--Schmidt and
\begin{equation}\label{eq:HS}
\|\Aa\|_{\rm HS}^2=\sum_{j,k}\|T^k\phi_j\|^2
\le\frac{\sum_j\|\phi_j\|^2}{1-q^2}=:M_q.
\end{equation}
It has no positive lower frame bound. On an $r$-dimensional subspace, any lower bound $A_r$ satisfies $rA_r\le M_q$. The same conclusions hold for countably many windows with summable squared norms.
\item If $U(t)$ is a strongly continuous unitary group and $C:\Hh\to\C^J$ is bounded, then $f\mapsto CU(\cdot)f$ into $L^2(0,T;\C^J)$ is Hilbert--Schmidt for finite $T$, with squared Hilbert--Schmidt norm $T\|C\|_{\rm HS}^2$. Finitely many discrete observation times give a finite-rank map. Neither map is bounded below.
\item If $U$ is unitary and $C:\Hh\to Y$ is bounded, a one-sided observation satisfying $\sum_{k\ge0}\|CU^kf\|^2\le B\|f\|^2$ cannot have a positive lower bound on all of $\Hh$.
\end{enumerate}
\end{proposition}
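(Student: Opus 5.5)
The plan is to prove all three parts through one principle: a Hilbert--Schmidt (more generally, compact) operator on an infinite-dimensional space cannot be bounded below. If $\Aa$ is compact with $\|\Aa f\|^2\ge A\|f\|^2$, then $\Aa^*\Aa$ is a compact operator with $\Aa^*\Aa\ge A I$. It is therefore invertible, so the identity $I=(\Aa^*\Aa)^{-1}\Aa^*\Aa$ is compact. This is impossible in infinite dimensions. For the quantitative statement, let $E$ be an $r$-dimensional subspace with orthonormal basis $e_1,\dots,e_r$ on which $A_r\|f\|^2\le\|\Aa f\|^2$. Then
\[
rA_r\le\sum_{i=1}^r\|\Aa e_i\|^2\le\|\Aa\|_{\rm HS}^2 .
\]

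For (i), I will compute the Hilbert--Schmidt norm directly. Take an orthonormal basis $(e_i)$ of $\Hh$ and apply Tonelli:
\[
\sum_i\sum_{j,k}|\ip{e_i}{T^k\phi_j}|^2=\sum_{j,k}\|T^k\phi_j\|^2 .
\]
The bound $\|T^k\phi_j\|\le q^k\|\phi_j\|$ and the geometric series then give \eqref{eq:HS}. The absence of a lower frame bound and the inequality $rA_r\le M_q$ follow from the principle above. For countably many windows the same computation applies, with $\sum_j\|\phi_j\|^2<\infty$ replacing the finite sum.

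For (ii), write $C f=(\ip{f}{c_j})_{j\le J}$ with $c_j=C^*\epsilon_j$, where $(\epsilon_j)$ is the standard basis of $\C^J$. Then
\[
\sum_i\int_0^T\|CU(t)e_i\|^2\,dt=\int_0^T\sum_j\sum_i|\ip{e_i}{U(t)^*c_j}|^2\,dt=\int_0^T\sum_j\|c_j\|^2\,dt=T\|C\|_{\rm HS}^2 .
\]
Here unitarity is used, and Tonelli applies because strong continuity makes the integrand measurable. Discrete times $t_1,\dots,t_N$ give a map into $\C^{JN}$, which has finite rank. Both maps are compact, so neither is bounded below.

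Part (iii) is the only step that is not pure compactness, since $C$ may have infinite-dimensional range. I will use a tail argument. Suppose $A\|f\|^2\le\sum_{k\ge0}\|CU^kf\|^2\le B\|f\|^2$ for all $f$. Apply this to $U^nf$, using $\|U^nf\|=\|f\|$:
\[
A\|f\|^2\le\sum_{k\ge n}\|CU^kf\|^2 .
\]
For fixed $f$, the right side is the tail of a convergent series, so it tends to $0$ as $n\to\infty$. Hence $f=0$ for every $f$, which contradicts $\Hh\ne\{0\}$. This argument only needs the upper bound at the single vector $f$ and unitarity of $U^n$; I expect the main care point to be exactly this use of isometry to shift the index. (The statement says "cannot have a positive lower bound on all of $\Hh$", which is what this proves.)
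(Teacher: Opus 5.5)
Your proposal is correct and follows the paper's proof essentially step for step: the same Tonelli--Parseval computation of the Hilbert--Schmidt norms in (i) and (ii), the same summation over $r$ orthonormal vectors giving $rA_r\le M_q$, and the same shifted-tail argument using $\|U^nf\|=\|f\|$ in (iii). The only cosmetic difference is that you rule out a global lower bound through compactness of $\Aa^*\Aa$, whereas the paper lets $r\to\infty$ in $rA_r\le M_q$; the two arguments are equivalent.
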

\begin{proof}
For an orthonormal basis $(e_n)$, Tonelli and Parseval give
$\sum_n\|\Aa e_n\|^2=\sum_{j,k}\|T^k\phi_j\|^2$; the geometric series gives \eqref{eq:HS}. Summing a putative lower bound over $r$ orthonormal vectors gives $rA_r\le M_q$, which rules out a fixed positive bound in infinite dimension. For (ii), write $Cf=(\ip{f}{\phi_j})_j$ and integrate $\sum_j\|U(t)^*\phi_j\|^2=\sum_j\|\phi_j\|^2$. Finite discrete data have finite-dimensional codomain. For (iii), fix $f\ne0$. The squared observation norm of $U^nf$ is $\sum_{k=n}^\infty\|CU^kf\|^2\to0$, although $\|U^nf\|=\|f\|$.
\end{proof}

\begin{remark}\label{rem:obstructions}
Strict contractivity is stronger than strong stability $T^nf\to0$. One-sided unitary orbits can be Bessel; part (iii) excludes a global lower bound, not Besselness. The proposition does not cover unbounded instantaneous observations with bounded time-integrated realizations. For transport, for example, the trace $q(t,0)=f(-t)$ is an isometry from $L^2(-T,0)$ to $L^2(0,T)$, although point evaluation on $L^2(\R)$ is not bounded. The shell detector used below has infinite spatial rank, and the initial support subspace is not invariant under translation. These facts explain why the positive results do not contradict the obstructions.
\end{remark}

\section{Exact Rindler observation bounds}\label{sec:exact}
Choose $0<\varepsilon<\rho_0<\rho_1$ and put
\begin{equation}\label{eq:Ld}
L=\kappa^{-1}\log(\rho_0/\varepsilon),\qquad
d=\kappa^{-1}\log(\rho_1/\rho_0),\qquad
X_L=L^2([-L,0]),\quad I=[0,d].
\end{equation}
The unknown is $f=q_+(0,\cdot)\in X_L$, and initially $q_-=0$. The measured quantity is the characteristic amplitude
$q_+=(\partial_tu-\kappa\rho\partial_\rho u)/2$ on the shell $\rho_0\le\rho\le\rho_1$, recorded in the norm $L^2(dt\,d\rho/(\kappa\rho))$. This idealized distributed observation requires a spatial profile, not finitely many bounded scalar readings.

Let $J_L:X_L\to L^2(\R)$ be zero extension and let $C_I$ be restriction to $I$. The observation is the bounded map
\begin{equation}\label{eq:Odef}
\Oo_{T,L}:X_L\longrightarrow Y_T:=L^2((0,T)\times I),\qquad
(\Oo_{T,L}f)(t,s)=f(s-t)=C_IV(t)J_Lf(s).
\end{equation}
All initial and observed intervals in this paper specify supports or detectors, not reflecting wave boundaries.

\begin{theorem}[Sharp continuous observation bounds]\label{thm:exact}
For $L,d,T>0$, the Gramian of \eqref{eq:Odef} is
\begin{equation}\label{eq:weight}
\Oo_{T,L}^*\Oo_{T,L}=M_{w_T},\qquad
w_T(z)=|[0,T]\cap[-z,d-z]|=\min\{d,(T+z)_+\},\quad -L\le z\le0.
\end{equation}
The optimal squared lower and upper bounds are
\begin{equation}\label{eq:AB}
A(T,L)=\min\{d,(T-L)_+\},\qquad B(T,L)=\min\{d,T\}.
\end{equation}
More precisely:
\begin{enumerate}
\item If $T<L$, then $\ker\Oo_{T,L}=L^2((-L,-T))$, embedded in $X_L$.
\item If $T=L$, then $\Oo_{T,L}$ is injective, is not bounded below, and has nonclosed range.
\item If $T>L$, then its canonical bounded left inverse is
\begin{equation}\label{eq:Rexplicit}
(R_{T,L}y)(z)=\frac1{w_T(z)}
\int_{\max(0,-z)}^{\min(T,d-z)} y(t,z+t)\,dt,
\end{equation}
with $\|R_{T,L}\|=A(T,L)^{-1/2}$ and
\begin{equation}\label{eq:condexact}
\cond(\Oo_{T,L})=
\sqrt{\frac{\min(d,T)}{\min(d,T-L)}}.
\end{equation}
For $T\ge L+d$ the observation is tight, with both squared bounds equal to $d$.
\end{enumerate}
\end{theorem}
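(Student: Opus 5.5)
The plan is to compute the Gramian directly by a change of variables and then read off everything from the multiplication operator $M_{w_T}$. For $f\in X_L$, Tonelli gives
\[
\|\Oo_{T,L}f\|^2=\int_0^T\!\!\int_0^d |f(s-t)|^2\,ds\,dt=\int_{-L}^0 |f(z)|^2\,\big|\{t\in[0,T]: z+t\in[0,d]\}\big|\,dz ,
\]
substituting $z=s-t$ for fixed $t$. The inner measure is $|[0,T]\cap[-z,d-z]|$. For $-L\le z\le 0$ we have $-z\ge0$ and $d-z\ge d$, so this length is $\min(T,d-z)-\min(T,-z)$ when positive. This equals $\min\{d,(T+z)_+\}$ by a short case check on $T\le -z$, $-z<T\le d-z$ and $T>d-z$. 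Polarization then identifies $\Oo^*\Oo=M_{w_T}$ on $X_L$. Since $w_T$ is continuous and nondecreasing in $z$, its essential infimum and supremum over $[-L,0]$ are $w_T(-L)=\min\{d,(T-L)_+\}$ and $w_T(0)=\min\{d,T\}$. Optimal bounds of a multiplication operator are the essential extrema of the symbol, which gives \eqref{eq:AB}. For optimality one tests with indicators of small intervals near $-L$ and $0$.

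For the three cases I would argue as follows.
\begin{enumerate}
\item If $T<L$, then $w_T=0$ exactly on $[-L,-T]$ and $w_T>0$ on $(-T,0]$. Hence $\|\Oo f\|^2=\int w_T|f|^2=0$ if and only if $f$ vanishes a.e.\ on $(-T,0)$, and the kernel is $L^2((-L,-T))$.
\item If $T=L$, then $w_T(z)=\min\{d,T+z\}>0$ for $z>-L$, so the operator is injective. Indicators $f_\delta$ of $[-L,-L+\delta]$ give $\|\Oo f_\delta\|^2/\|f_\delta\|^2\le\delta\to0$, so $\Oo$ is not bounded below. By the closed-range argument in Lemma~\ref{lem:inverse} (closed range plus injectivity gives a bounded inverse on the range, by the open mapping theorem), an injective operator that is not bounded below cannot have closed range.
\item If $T>L$, then $A>0$ and Lemma~\ref{lem:inverse} gives $R=M_{w_T}^{-1}\Oo^*$, with $\|R\|=A^{-1/2}$ and $\cond=\sqrt{B/A}$. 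This is \eqref{eq:condexact}, since $(T-L)_+=T-L$. Tightness for $T\ge L+d$ follows because then $T-L\ge d$ and $T\ge d$, so $w_T\equiv d$.
\end{enumerate}

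The remaining step is the explicit adjoint. For $y\in Y_T$, pairing $\int_0^T\int_0^d f(s-t)\overline{y(t,s)}\,ds\,dt$ and substituting $s=z+t$ gives
\[
(\Oo^*y)(z)=\int_{\max(0,-z)}^{\min(T,d-z)} y(t,z+t)\,dt ,
\]
again by Tonelli on $L^1$-dominated products. Dividing by $w_T(z)$ yields \eqref{eq:Rexplicit}. The main technical care is in justifying this trace-along-diagonals formula for a general $L^2$ function $y$, where pointwise restriction to lines is not defined. I would handle it by Fubini in the rotated coordinates $(t,z)$: the map $(t,s)\mapsto(t,s-t)$ is measure preserving, so $y(t,z+t)$ is defined for a.e.\ $z$ as an $L^2$ function of $t$ on the relevant interval. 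Cauchy--Schwarz then bounds $|\Oo^*y(z)|^2\le w_T(z)\int|y(t,z+t)|^2\,dt$, which confirms boundedness directly.
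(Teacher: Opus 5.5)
Your proposal is correct and follows the paper's proof essentially step for step. The steps match: the Tonelli change of variables along characteristics giving the monotone multiplier $w_T$, endpoint-localized test functions for optimality, the zero set of $w_T$ for (i), the bounded inverse theorem for (ii), and the adjoint along characteristics with Cauchy--Schwarz followed by Lemma~\ref{lem:inverse} for (iii). The only difference is cosmetic: you justify the adjoint formula for general $y$ directly through the measure-preserving shear $(t,s)\mapsto(t,s-t)$, whereas the paper first computes it on test data and then extends.
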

\begin{proof}
Tonelli's theorem and the change of variables $z=s-t$ give
\[
\|\Oo_{T,L}f\|^2
=\int_{-L}^0 |f(z)|^2\int_0^T\ind_{[0,d]}(z+t)\,dt\,dz.
\]
For $z\le0$, the inner integral is \eqref{eq:weight}. The Gramian follows by polarization. The multiplier is nondecreasing in $z$; its essential infimum and supremum are exactly \eqref{eq:AB}. Normalized functions supported in successively smaller endpoint neighborhoods show that these constants are optimal.

The zero set of the multiplier gives (i). At $T=L$, it is positive almost everywhere, so the map is injective. A unit vector supported in $[-L,-L+\eta]$ has squared observation norm at most $\eta$. A bounded injective operator with closed range is bounded below, by the bounded inverse theorem on its range. This proves (ii).

For test data, a change of variables in $\ip{\Oo f}{y}$ shows that the adjoint is the characteristic integral in \eqref{eq:Rexplicit}, without the factor $1/w_T$. Cauchy--Schwarz along each characteristic shows that this defines a bounded map for general $y\in Y_T$, with the formula valid for almost every $z$. If $T>L$, multiplication by $1/w_T$ is bounded and gives the left inverse. Lemma~\ref{lem:inverse} proves its exact norm and the condition number. If $T\ge L+d$, every characteristic is recorded for the full shell-crossing time $d$.
\end{proof}

\begin{corollary}[Radial interpretation]\label{cor:radial}
For this detector and data norm, stable reconstruction of every $f\in X_L$ during $[0,T]$ is equivalent to
\begin{equation}\label{eq:radialthreshold}
T>\kappa^{-1}\log(\rho_0/\varepsilon)
\quad\Longleftrightarrow\quad
\varepsilon>\rho_0e^{-\kappa T}.
\end{equation}
If $T=L+\sigma$ with fixed $\sigma>0$, its lower bound remains $\min(d,\sigma)$ as $L$ increases.
\end{corollary}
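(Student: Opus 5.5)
The corollary is a direct translation of Theorem~\ref{thm:exact} into radial variables, so the plan is to read the equivalence off the trichotomy there and then compute the threshold. I will interpret ``stable reconstruction of every $f\in X_L$'' as the existence of a bounded left inverse of $\Oo_{T,L}$. By Lemma~\ref{lem:inverse} and the bounded inverse theorem, this is the same as a positive lower observation bound.

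First, suppose $T>L$. Then Theorem~\ref{thm:exact}(iii) supplies the explicit bounded left inverse $R_{T,L}$ with norm $A(T,L)^{-1/2}$, so reconstruction is stable. Conversely, suppose $T\le L$. If $T<L$, part (i) gives a nontrivial kernel $L^2((-L,-T))$, so no left inverse exists at all. If $T=L$, part (ii) shows that $\Oo_{T,L}$ is not bounded below. Any bounded left inverse $R'$ would force $\|f\|\le\|R'\|\,\|\Oo_{T,L}f\|$, which is impossible. Equivalently, the optimal constant $A(T,L)=\min\{d,(T-L)_+\}$ from \eqref{eq:AB} is positive exactly when $T>L$.

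Next, substitute $L=\kappa^{-1}\log(\rho_0/\varepsilon)$ from \eqref{eq:Ld}. Since $\kappa>0$ and the exponential is strictly increasing, $T>L$ is equivalent to $\kappa T>\log(\rho_0/\varepsilon)$. This in turn is equivalent to $e^{\kappa T}>\rho_0/\varepsilon$, that is, $\varepsilon>\rho_0e^{-\kappa T}$, which is \eqref{eq:radialthreshold}.

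For the final sentence, take $T=L+\sigma$ with $\sigma>0$. Then \eqref{eq:AB} gives $A=\min\{d,\sigma\}$, which is independent of $L$. Here $d$ depends only on $\rho_1/\rho_0$, so it stays fixed while $\varepsilon\to0$ increases $L$. I expect no real obstacle. The only point needing care is the $T=L$ case, where injectivity holds but stability fails, and this is already settled by part (ii) of Theorem~\ref{thm:exact}.
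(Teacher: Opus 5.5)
Your proposal is correct and follows the paper's route: the paper's proof just substitutes \eqref{eq:Ld} into \eqref{eq:AB}, since $A(T,L)>0$ exactly when $T>L$. It adds, as an equivalent reading, that an outgoing radial null curve with $d\rho/dt=\kappa\rho$ needs time $L$ to travel from $\varepsilon$ to $\rho_0$. Your identification of stable reconstruction with a positive lower bound, and your separate use of Theorem~\ref{thm:exact}(i) for $T<L$ and (ii) for $T=L$, only make explicit what that one-line substitution leaves implicit.
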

\begin{proof}
Substitute \eqref{eq:Ld} in \eqref{eq:AB}. Equivalently, an outgoing radial null curve satisfies $d\rho/dt=\kappa\rho$, so it takes time $L$ to travel from $\varepsilon$ to $\rho_0$.
\end{proof}

The threshold expresses insufficient recording time for the chosen detector. At fixed $T$ the lower bound vanishes at a positive cutoff radius, before a horizon limit is reached. Allowing additional recording time prevents this degeneration. An initially incoming component below the shell travels to smaller $s$ and is not observed by this outer detector in positive time. The theorem recovers only the outgoing data specified above.

\section{Discrete sampling and spatial fusion frames}\label{sec:discrete}
\subsection{Coverage and optimal uniform-grid bounds}
For $\Delta>0$ and $N\ge1$, record complete shell profiles at $t_k=k\Delta$, including $k=N$. Use the normalization
\begin{equation}\label{eq:discreteanalysis}
\Aa_{\Delta,N}:X_L\longrightarrow\bigoplus_{k=0}^N L^2(I),\qquad
(\Aa_{\Delta,N}f)_k(s)=\sqrt\Delta\,f(s-k\Delta).
\end{equation}
The subspaces carrying the transported measurements are
\begin{equation}\label{eq:Wk}
E_k=[-L,0]\cap[-k\Delta,d-k\Delta],\qquad
W_k=\{f\in X_L:\supp f\subset E_k\}.
\end{equation}
They are closed, and $P_{W_k}=M_{\ind_{E_k}}$. A weighted fusion frame for $X_L$ is a family of closed subspaces and positive weights $(W_k,v_k)$ for which there exist constants $0<A\le B<\infty$ such that
$A\|f\|^2\le\sum_kv_k^2\|P_{W_k}f\|^2\le B\|f\|^2$ for every $f\in X_L$.
We omit zero subspaces when using this terminology.

\begin{theorem}[Discrete radial fusion frame]\label{thm:discrete}
The map \eqref{eq:discreteanalysis} has Gramian
\begin{equation}\label{eq:discreteweight}
\Aa_{\Delta,N}^*\Aa_{\Delta,N}=M_{w_{\Delta,N}},\qquad
w_{\Delta,N}(z)=\Delta\sum_{k=0}^N\ind_I(z+k\Delta).
\end{equation}
It is bounded below, equivalently $(W_k,\sqrt\Delta)$ is a fusion frame for $X_L$, if and only if
\begin{equation}\label{eq:coveragecondition}
d\ge\Delta,\qquad N\Delta\ge L.
\end{equation}
Under these conditions, write
\[
a=L/\Delta,\qquad b=d/\Delta=p+r,\qquad
p=\lfloor b\rfloor,\quad 0\le r<1,
\]
and set $\chi=1$ when $r>0$ and $a>1-r$, and $\chi=0$ otherwise.
The optimal squared bounds are
\begin{equation}\label{eq:optimaldiscrete}
A_{\Delta,N}=\Delta\min\{p,N-\lceil a\rceil+1\},\qquad
B_{\Delta,N}=\Delta\min\{N,p+\chi\}.
\end{equation}
The canonical left inverse is
\begin{equation}\label{eq:discreteinverse}
(R_{\Delta,N}y)(z)=\frac{\sqrt\Delta}{w_{\Delta,N}(z)}
\sum_{k=0}^N\ind_I(z+k\Delta)y_k(z+k\Delta).
\end{equation}
Its norm is $A_{\Delta,N}^{-1/2}$ and its relative condition number is $\sqrt{B_{\Delta,N}/A_{\Delta,N}}$.
\end{theorem}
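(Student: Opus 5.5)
The plan follows the proof of Theorem~\ref{thm:exact}, with the time integral replaced by a finite sum. First I compute the Gramian. By definition,
\[
\|\Aa_{\Delta,N}f\|^2=\Delta\sum_{k=0}^N\int_I|f(s-k\Delta)|^2\,ds
=\int_{-L}^0|f(z)|^2\,\Delta\sum_{k=0}^N\ind_I(z+k\Delta)\,dz .
\]
This uses the substitution $z=s-k\Delta$ termwise. Polarization then gives \eqref{eq:discreteweight}. Since $\ind_I(z+k\Delta)=\ind_{E_k}(z)$ on $[-L,0]$ and $P_{W_k}=M_{\ind_{E_k}}$, the same identity reads $\|\Aa_{\Delta,N}f\|^2=\sum_k\Delta\|P_{W_k}f\|^2$. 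Hence boundedness below is the same as the fusion-frame property with weights $\sqrt\Delta$. As in Theorem~\ref{thm:exact}, the optimal bounds are the essential infimum and supremum of the multiplier $w_{\Delta,N}$ on $[-L,0]$. Optimality comes from normalized functions concentrated on small sets where these extrema are attained.

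Next I reduce everything to an integer-counting problem. Put $x=-z/\Delta\in[0,a]$. Then
\[
w_{\Delta,N}(z)/\Delta=n(x):=\#\{k\in\mathbb Z:\ x\le k\le x+b,\ 0\le k\le N\}.
\]
Off the null set where $x$ or $x+b$ is an integer, this equals $\min(\lfloor x+b\rfloor,N)-\lceil x\rceil+1$ whenever that quantity is nonnegative, and $0$ otherwise. An interval of length $b=p+r$ contains $p$ or $p+1$ integers. It contains exactly $p$ integers on the sets $x\in(n,n+1-r)$, or almost everywhere if $r=0$, and $p+1$ integers on the sets $x\in(n-r,n)$ when $r>0$.

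For the essential infimum I consider two regimes. Away from the cap at $N$, the value $p$ is attained on a set of positive measure inside $(0,\min(a,1-r))$. Near $x=a^-$, the cap $k\le N$ is active once $x+b\ge N$, and there $n(x)=N-\lceil a\rceil+1$ a.e. Moreover $n(x)\ge\min\{p,N-\lceil x\rceil+1\}\ge\min\{p,N-\lceil a\rceil+1\}$ everywhere. This gives $A_{\Delta,N}$. It is positive exactly when $p\ge1$ and $N\ge\lceil a\rceil$, which is \eqref{eq:coveragecondition}.

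For the essential supremum I use the bound $n(x)\le N+1-\lceil x\rceil\le N$ for $x>0$, and the bound $n\le p+1$. The value $p+1$ occurs on a positive-measure subset of $(0,a)$ exactly when $r>0$ and some window $(n-r,n)$ with $n\ge1$ meets $(0,a)$, that is, when $a>1-r$. This is the indicator $\chi$.

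Finally, Lemma~\ref{lem:inverse} gives the canonical left inverse $M_{w_{\Delta,N}}^{-1}\Aa_{\Delta,N}^*$, its norm $A_{\Delta,N}^{-1/2}$, and the condition number $\sqrt{B_{\Delta,N}/A_{\Delta,N}}$. The adjoint is read off from $\ip{\Aa f}{y}$ by the same substitution:
\[
(\Aa^*y)(z)=\sqrt\Delta\sum_k\ind_I(z+k\Delta)\,y_k(z+k\Delta).
\]
This yields \eqref{eq:discreteinverse}.

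The main obstacle is the bookkeeping of the essential extrema when the truncation at $N$ and the floor/ceiling structure interact. For the infimum, I must check that the two candidate values $p$ and $N-\lceil a\rceil+1$ are each attained on sets of positive measure, or at least that the smaller one is. For the supremum, I must check that $N$ and $p+\chi$ cannot both fail to be attained, for instance when $a$ is small or when $N$ is barely larger than $a$. I would handle this by treating separately the cases $N-\lceil a\rceil+1\le p$ and $N-\lceil a\rceil+1>p$, and for the supremum the cases $N\le p+\chi$ and $N>p+\chi$. In each case I would exhibit an explicit subinterval of $(0,a)$ of positive length on which $n(x)$ takes the claimed extremal value.
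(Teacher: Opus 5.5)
Your proposal is correct and follows essentially the paper's proof. It uses the same termwise change of variables to the coverage multiplier, the same reduction via $x=-z/\Delta$ to the count $\min\{p+\ind_{\{\{x\}>1-r\}},\,N-\lceil x\rceil+1\}$, and Lemma~\ref{lem:inverse} for the inverse; the one minor difference is that you read off necessity of \eqref{eq:coveragecondition} from the general count, whereas the paper exhibits uncovered intervals.

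The case split you anticipate is not needed. The essential infimum of a pointwise minimum is the minimum of the two essential infima, attained near $x=0$ and near $x=a$. For the supremum, $\min\{N,p+\chi\}$ is already attained in the first cell $(0,\min\{1,a\})$, where the cap equals $N$.
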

\begin{proof}
Changing variables separately at each observation time gives
$\|\Aa_{\Delta,N}f\|^2=\int_{-L}^0w_{\Delta,N}|f|^2$.
This proves the Gramian identity and the fusion-frame equivalence. If $N\Delta<L$, a left subinterval of $[-L,0]$ is uncovered. If $d<\Delta$, a positive-length interval immediately to the left of zero is uncovered: $k=0$ sees no negative initial point, and every $k\ge1$ interval ends at or below $d-\Delta<0$. Conversely, if \eqref{eq:coveragecondition} holds, the intervals $[-k\Delta,d-k\Delta]$ meet or overlap and cover $[-N\Delta,d]$. Their coverage of $[-L,0]$ is at least one almost everywhere.

To compute the essential multiplicities, put $x=-z/\Delta\in(0,a)$. For almost every $x\in(0,a)$, excluding the finitely many points at which $x\in\mathbb Z$ or $x+b\in\mathbb Z$, the count is
\begin{equation}\label{eq:countformula}
n(x)=\#\{k\in\{1,\ldots,N\}:x\le k\le x+b\}
=\min\{p+\ind_{\{\{x\}>1-r\}},\,N-\lceil x\rceil+1\},
\end{equation}
where the indicator is zero when $r=0$ and $\{x\}$ is the fractional part. Since $a\le N$, the second entry is positive. Its smallest value is $N-\lceil a\rceil+1$, attained near $a$ from the left. The first entry has minimum $p$, attained near zero. Thus the essential minimum is the minimum in \eqref{eq:optimaldiscrete}. For the maximum, the additional count $p+1$ occurs somewhere in $(0,a)$ precisely when $r>0$ and $a>1-r$; if so it already occurs in the first unit cell, before the terminal cap can reduce the count below $\min(N,p+1)$. Otherwise the maximum is $\min(N,p)$. This proves the upper formula. Each stated count holds on an interval of positive length, proving optimality. Applying the adjoint and dividing by the multiplier gives \eqref{eq:discreteinverse}; Lemma~\ref{lem:inverse} finishes the proof.
\end{proof}

In radial variables, $E_k$ is the initial interval intersected with
$[\rho_0e^{-\kappa k\Delta},\rho_1e^{-\kappa k\Delta}]$. The fusion-frame condition therefore says that transported radial observation regions cover the whole initial region. When $d=\Delta$ and $N\Delta\ge L$, the frame is tight with squared bound $\Delta$.

\begin{remark}[Endpoint sampling and dual blocks]\label{rem:discretedual}
Assume the coverage condition \eqref{eq:coveragecondition} throughout this remark. At $N\Delta=L$, the discrete lower bound is $\Delta$, although the continuous lower bound at $T=L$ is zero. The last full spatial profile carries positive mass in the discrete norm; one time has zero mass in the continuous norm. There is no contradiction, and the discrete bound tends to zero as this critical-time grid is refined.

For the fusion analysis $\mathcal C f=(\sqrt\Delta P_{W_k}f)_k$ and synthesis
$\mathcal T(g_k)=\sqrt\Delta\sum_k g_k$, the block map
$Q(g_k)=(M_{1/w_{\Delta,N}}g_k)_k$ preserves each $W_k$ and satisfies
$\mathcal TQ\mathcal C=I$. Indeed, multiplication by $1/w_{\Delta,N}$ is bounded and commutes with the support projections. This supplies a component-preserving dual reconstruction in the sense of \cite{HMBZ}, with the reconstruction identity explicitly verified. Mere preservation of the blocks would not imply that identity.
\end{remark}

\subsection{Nonuniform observations and erased profiles}
The same geometry gives an exact robustness statement without assumptions on small operator-norm perturbations of interval indicators.

General sufficient bounds after erasure of fusion-frame subspaces are given in \cite[Theorem 3.2]{CKerasures}. The commuting support projections here make it possible to compute the optimal common surviving bound and the exact uniform-grid threshold.

\begin{theorem}[Weighted coverage and erasures]\label{thm:erasures}
Let there be finitely many observations indexed by $i=1,\ldots,J$, with times $t_i\ge0$, bounded shell intervals $I_i\subset\R$, and weights $v_i>0$. Define
\[
(\mathcal Bf)_i(s)=v_i f(s-t_i),\quad s\in I_i,
\qquad E_i=[-L,0]\cap(I_i-t_i),\qquad b_i(z)=v_i^2\ind_{E_i}(z).
\]
The Gramian is multiplication by $w=\sum_i b_i$, with optimal bounds $\essinf w$ and $\esssup w$ on $[-L,0]$. The subspaces $L^2(E_i)$ with weights $v_i$ form a fusion frame exactly when $\essinf w>0$.

Fix $0\le r\le J$. Order $b_i(z)$ in decreasing order as $b_{(1)}(z),\ldots,b_{(J)}(z)$, and define
\begin{equation}\label{eq:robustbound}
A^{(r)}=\essinf_{z\in[-L,0]}
\left(w(z)-\sum_{j=1}^r b_{(j)}(z)\right).
\end{equation}
This is the optimal common lower bound after arbitrary deletion of at most $r$ whole observation profiles. Every such deletion permits stable reconstruction if and only if $A^{(r)}>0$. If all weights equal $v$, this is equivalent to coverage multiplicity at least $r+1$ almost everywhere.
\end{theorem}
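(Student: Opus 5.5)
The plan is to follow the proof of Theorem~\ref{thm:discrete}. First I compute the Gramian. For each $i$, Tonelli's theorem and the substitution $z=s-t_i$ give
\[
\|(\mathcal Bf)_i\|^2=v_i^2\int_{I_i}|f(s-t_i)|^2ds=\int_{-L}^0 b_i(z)|f(z)|^2dz,
\]
because $f$ is supported in $[-L,0]$. Summing over $i$ and polarizing shows $\mathcal B^*\mathcal B=M_w$. The bounds $\essinf w$ and $\esssup w$ are valid. They are optimal by the usual test functions: normalized indicators of sets of positive measure on which $w$ is within $\eta$ of its essential extremum. Since $\|v_iP_{L^2(E_i)}f\|^2=\int b_i|f|^2$, the fusion-frame sums coincide with $\|\mathcal Bf\|^2$. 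Hence the fusion-frame property is equivalent to $\essinf w>0$. Finiteness of $J$ gives the upper bound $\sum_i v_i^2$.

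For the erasure statement, fix a deletion set $D$ with $|D|\le r$. The surviving Gramian is multiplication by $w_D=w-\sum_{i\in D}b_i$. Pointwise, $\sum_{i\in D}b_i(z)\le\sum_{j=1}^r b_{(j)}(z)$, since $D$ contains at most $r$ of the numbers $b_i(z)\ge0$. Therefore $\essinf w_D\ge A^{(r)}$, so $A^{(r)}$ is a common lower bound.

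Optimality is the main obstacle, because the maximizing set depends on $z$ while a deletion is a single global set. The remedy is finiteness. Partition $[-L,0]$, up to null sets, into the finitely many atoms of the Boolean algebra generated by $E_1,\dots,E_J$. On each atom every $b_i$ is constant, so a fixed set $D_\alpha$ of $r$ largest indices (ties broken by index) realizes $\sum_{j\le r}b_{(j)}$ throughout that atom. Consequently $w-\sum_{j\le r}b_{(j)}$ is a simple function. Its essential infimum equals its value $\gamma$ on some atom $\alpha$ of positive measure. Deleting $D_\alpha$ gives $w_{D_\alpha}=\gamma$ on that atom. Testing with the normalized indicator of the atom shows that no common bound exceeds $A^{(r)}$.

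The equivalence then follows from Lemma~\ref{lem:inverse}. If $A^{(r)}>0$, every surviving Gramian is bounded below, so every deletion admits a bounded left inverse. If $A^{(r)}=0$, the deletion $D_\alpha$ has surviving weight zero on a set of positive measure. Its observation map then has nontrivial kernel, so no stable reconstruction exists. For equal weights $v$, we have $b_i=v^2\ind_{E_i}$. Writing $n(z)=\#\{i:z\in E_i\}$, the deficient sum equals $v^2(n(z)-r)_+$, since removing the $r$ largest entries removes $\min(r,n(z))$ copies of $v^2$. Its essential infimum is positive iff $n\ge r+1$ almost everywhere; here $n$ is integer-valued and simple.
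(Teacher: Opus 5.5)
Your proof is correct and follows the paper's argument: the Gramian is the coverage multiplier $M_w$, deleting the $r$ largest values $b_i(z)$ is the pointwise worst case, and finiteness turns this pointwise worst case into a single global deletion set. The paper does that last step by interchanging $\essinf_z$ with the minimum over the finitely many deletion sets. You instead make it concrete through the atoms generated by $E_1,\dots,E_J$, which is the same finiteness argument written out explicitly.
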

\begin{proof}
The change of variables used in Theorem~\ref{thm:discrete} gives the multiplier and its optimal bounds. After a fixed deletion set $D$, the remaining multiplier is $w_D=\sum_{i\notin D}b_i$. The smallest possible value at each $z$ is obtained by deleting the $r$ largest terms. Since there are only finitely many deletion sets, the pointwise minimum and the essential infimum can be interchanged:
$\essinf_z\min_Dw_D(z)=\min_D\essinf_z w_D(z)$.
This proves \eqref{eq:robustbound}, including necessity. A finite indicator sum takes finitely many values, so strict positive coverage is equivalent to a positive essential lower bound. Equal weights reduce the formula to $v^2(\#\{i:z\in E_i\}-r)_+$.
\end{proof}

For a known surviving set, the canonical inverse is obtained by dividing its adjoint by $w_D$. In the above notation it is
$w_D(z)^{-1}\sum_{i\notin D}v_i\ind_{I_i}(z+t_i)y_i(z+t_i)$.

\begin{corollary}[Sharp uniform-grid erasure threshold]\label{cor:uniformerasure}
For \eqref{eq:discreteanalysis} and $0\le r\le N$, stable reconstruction survives every deletion of at most $r$ sample times if and only if
\begin{equation}\label{eq:erasurethreshold}
d\ge(r+1)\Delta,\qquad (N-r)\Delta\ge L.
\end{equation}
When this holds, the optimal common squared lower bound is
$A^{(r)}=A_{\Delta,N}-r\Delta$.
\end{corollary}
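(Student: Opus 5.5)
The plan is to specialize Theorem~\ref{thm:erasures} to the uniform grid of \eqref{eq:discreteanalysis}, and then read off the coverage multiplicity from the count formula \eqref{eq:countformula} established in the proof of Theorem~\ref{thm:discrete}.

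First I identify the data. Take $J=N+1$ observations indexed by $k=0,\ldots,N$, with $t_k=k\Delta$, $I_k=I=[0,d]$ and equal weights $v_k=\sqrt\Delta$. Then $E_k$ is exactly the set in \eqref{eq:Wk}. The multiplier $w$ of Theorem~\ref{thm:erasures} coincides with $w_{\Delta,N}$ in \eqref{eq:discreteweight}, and deleting sample times is the same as deleting whole observation profiles. By the equal-weight case of Theorem~\ref{thm:erasures}, for almost every $z$ the robust bound is
\[
A^{(r)}=\Delta\,\essinf_{z\in[-L,0]}\big(n(z)-r\big)_+,
\qquad n(z)=\#\{k:z\in E_k\}.
\]
Since $t\mapsto(t-r)_+$ is nondecreasing and continuous, this equals $\Delta\,(\essinf n-r)_+$. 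Stable reconstruction after every deletion of at most $r$ times is therefore equivalent to $\essinf n\ge r+1$.

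Next I compute $\essinf n$ without assuming \eqref{eq:coveragecondition}, since that condition is what must be derived at level $r$. Put $x=-z/\Delta\in(0,a)$. The $k=0$ set $E_0=[-L,0]\cap[0,d]$ is null. Hence, off finitely many points, $n$ is given by \eqref{eq:countformula}:
\[
n(x)=\min\{p+\ind_{\{\{x\}>1-r'\}},\,N-\lceil x\rceil+1\},
\]
where $r'$ is the fractional part of $b$; I rename it to avoid a clash with the erasure count $r$. The derivation of this formula only counts the integers $k\in[x,x+b]\cap\{1,\ldots,N\}$, so it remains valid without \eqref{eq:coveragecondition}, provided the second entry is read as $(N-\lceil x\rceil+1)_+$.

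The first entry attains its essential minimum $p$ on a positive-length set near $x=0$. The second entry attains $(N-\lceil a\rceil+1)_+$ on a positive-length set just left of $a$. This gives
\[
\essinf n=\min\{p,(N-\lceil a\rceil+1)_+\}.
\]
The condition $\essinf n\ge r+1$ is then equivalent to $p\ge r+1$ together with $N-r\ge\lceil a\rceil$. The first is $d\ge(r+1)\Delta$, because $p=\lfloor d/\Delta\rfloor$ is an integer. The second is $(N-r)\Delta\ge L$, because $N-r$ is an integer. These are exactly the conditions \eqref{eq:erasurethreshold}.

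When \eqref{eq:erasurethreshold} holds, \eqref{eq:coveragecondition} also holds, so \eqref{eq:optimaldiscrete} gives $\essinf n=A_{\Delta,N}/\Delta\ge r+1$. Consequently $A^{(r)}=\Delta(\essinf n-r)=A_{\Delta,N}-r\Delta$, and optimality is inherited from Theorem~\ref{thm:erasures}.

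The main obstacle is the bookkeeping in the multiplicity computation, specifically in two places. The first is justifying that both minima are attained on sets of positive measure rather than at single points. This is delicate when $a$ is an integer or when $r'=0$. The second is handling the degenerate situation where the coverage condition fails, in which the cap $N-\lceil x\rceil+1$ may become nonpositive. I would treat both by restricting to the open cells between consecutive points of $\mathbb Z\cup(\mathbb Z-b)$, where $n$ is constant.
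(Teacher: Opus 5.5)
Your proposal is correct and follows the paper's route. You specialize Theorem~\ref{thm:erasures} to equal weights $\sqrt\Delta$, take the multiplicity count from the proof of Theorem~\ref{thm:discrete} to get $\essinf n=\min\{p,N-\lceil a\rceil+1\}$, and convert $\essinf n\ge r+1$ into the two integer inequalities. Your extension of the count formula beyond \eqref{eq:coveragecondition} is valid but unnecessary: the empty deletion is one of the admissible deletions and already forces \eqref{eq:coveragecondition}, after which \eqref{eq:optimaldiscrete} applies directly.
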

\begin{proof}
Theorems~\ref{thm:discrete} and \ref{thm:erasures} require
$\min\{\lfloor d/\Delta\rfloor,N-\lceil L/\Delta\rceil+1\}\ge r+1$.
These two integer inequalities are exactly \eqref{eq:erasurethreshold}; subtracting $r$ from the minimum gives the bound.
\end{proof}

\section{Stability, noise and finite resolution}\label{sec:regularization}
\subsection{Cutoff and projection reconstruction}
The inverse in Theorem~\ref{thm:exact} reconstructs a prescribed initial interval. For a state extending farther toward the horizon, a cutoff introduces a separate error. Define $X_-=L^2(( -\infty,0))$, let $Q_L$ be multiplication by $\ind_{[-L,0]}$, and let $\Oo_T:X_-\to Y_T$ have the same formula as \eqref{eq:Odef}. It is bounded, with squared norm $\min(d,T)$.

\begin{theorem}[Noise and finite-resolution error]\label{thm:regularization}
Fix $0<L<T$ and a rank-$K$ orthogonal projection $P_K$ on $X_L$. If
$y=\Oo_Tf+\eta$, $f\in X_-$ and $\|\eta\|_{Y_T}\le\delta$, define, with zero extension back to $X_-$,
\begin{equation}\label{eq:estimator}
\widehat f_{L,K}=P_K R_{T,L}y.
\end{equation}
Then
\begin{equation}\label{eq:error}
\|f-\widehat f_{L,K}\|^2
\le\|(I-Q_L)f\|^2+\|(I-P_K)Q_Lf\|^2
+\frac{\delta^2}{A(T,L)}.
\end{equation}
The analogous statement holds for discrete data on $X_-$ whenever \eqref{eq:coveragecondition} holds, replacing $R_{T,L}$ and $A(T,L)$ by their discrete counterparts.

For the auxiliary operator
\begin{equation}\label{eq:auxD}
D_L=-\partial_z^2,\qquad
\Dom(D_L)=H^2(-L,0)\cap H^1_0(-L,0),
\end{equation}
let $P_K$ project onto its first $K$ normalized sine eigenfunctions. If, for a stated $\nu>0$, $g=Q_Lf\in\Dom(D_L^{\nu/2})$, then
\begin{equation}\label{eq:sourceerror}
\|(I-P_K)g\|\le
\left(\frac{L}{\pi(K+1)}\right)^\nu\|D_L^{\nu/2}g\|.
\end{equation}
\end{theorem}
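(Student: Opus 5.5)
The plan is to split $f$ into three pieces that live in mutually orthogonal subspaces of $X_-$, and then apply Pythagoras. The key structural fact is that the data $\Oo_T f$ depend on $f$ only through $Q_Lf$, at least after the reconstruction operator is applied. To justify this I would first check that $R_{T,L}$ only evaluates $y$ along characteristics starting at $z\in[-L,0]$. Indeed, by \eqref{eq:Rexplicit}, $(R_{T,L}y)(z)$ integrates $y(t,z+t)$ over the characteristic through $z$. For $y=\Oo_Tf$ one has $y(t,z+t)=f(z)$, so the tail $(I-Q_L)f$ never enters. Hence $R_{T,L}\Oo_T f=R_{T,L}\Oo_{T,L}Q_Lf=Q_Lf$, using the left-inverse property from Theorem~\ref{thm:exact}(iii). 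I would state this as the identity $R_{T,L}\Oo_T=Q_L$ on $X_-$. It can be verified directly from the formula by a change of variables, or from the fact that the adjoint of $\Oo_T$ restricted to $X_L$ is $\Oo_{T,L}^*$.

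With this identity, the error decomposes as
\[
f-\widehat f_{L,K}=(I-Q_L)f+(I-P_K)Q_Lf-P_KR_{T,L}\eta .
\]
The first term is supported in $(-\infty,-L)$, and the other two lie in $X_L$, so the first is orthogonal to the rest. Within $X_L$, the term $(I-P_K)Q_Lf$ lies in $\ran(I-P_K)$ and $P_KR_{T,L}\eta$ lies in $\ran P_K$, so these are orthogonal as well. Pythagoras then gives an exact identity. Finally, $\|P_KR_{T,L}\eta\|\le\|R_{T,L}\|\delta=A(T,L)^{-1/2}\delta$ by Theorem~\ref{thm:exact}(iii), which yields \eqref{eq:error}. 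The discrete case is identical. From \eqref{eq:discreteinverse}, $R_{\Delta,N}$ only samples $y_k(z+k\Delta)$ for $z\in[-L,0]$, so $R_{\Delta,N}\Aa_{\Delta,N}$ on $X_-$ equals $Q_L$ under \eqref{eq:coveragecondition}, and $\|R_{\Delta,N}\|=A_{\Delta,N}^{-1/2}$ by Theorem~\ref{thm:discrete}.

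For \eqref{eq:sourceerror}, I would use the spectral decomposition of $D_L$. Its normalized eigenfunctions are $e_n(z)=\sqrt{2/L}\sin(n\pi(z+L)/L)$ with eigenvalues $\lambda_n=(n\pi/L)^2$, $n\ge1$. This is a standard computation for the Dirichlet Laplacian, which is self-adjoint with compact resolvent. Then
\[
\|(I-P_K)g\|^2=\sum_{n>K}|\ip{g}{e_n}|^2\le\lambda_{K+1}^{-\nu}\sum_{n>K}\lambda_n^{\nu}|\ip{g}{e_n}|^2\le\lambda_{K+1}^{-\nu}\|D_L^{\nu/2}g\|^2 .
\]
Taking square roots gives the factor $(L/(\pi(K+1)))^\nu$.

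The main obstacle I expect is the bookkeeping in the identity $R_{T,L}\Oo_T=Q_L$, specifically confirming that the integration range in \eqref{eq:Rexplicit} never reaches data generated by $f$ outside $[-L,0]$. This holds because characteristics are parallel lines $s-t=z$, and distinct $z$ give disjoint lines, so there is no mixing. Everything else is orthogonality and norm bounds already established earlier in the paper.
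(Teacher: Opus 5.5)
Your proposal is correct and follows the paper's proof essentially step for step. The shared steps are the identity $R_{T,L}\Oo_T=Q_L$ from the characteristic integral, the three mutually orthogonal error pieces combined by Pythagoras with $\|R_{T,L}\|=A(T,L)^{-1/2}$, the same argument via the discrete multiplier, and the sine-eigenfunction tail bound through $\lambda_{K+1}^{-\nu}$. One phrasing is slightly off: the relevant fact is that $\Oo_{T,L}^*$ is the compression of $\Oo_T^*$ to $X_L$ and that $\Oo_T^*\Oo_T$ is a multiplier on all of $X_-$, which is exactly what your characteristic change of variables verifies.
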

\begin{proof}
The characteristic integral gives $R_{T,L}\Oo_Tf=Q_Lf$. Initial coordinates outside $[-L,0]$ follow different characteristics and do not enter this identity. Therefore
$\widehat f_{L,K}=P_KQ_Lf+P_KR_{T,L}\eta$.
The three vectors $(I-Q_L)f$, $(I-P_K)Q_Lf$ and $P_KR_{T,L}\eta$ are mutually orthogonal in $X_-$. Pythagoras and the exact inverse norm prove \eqref{eq:error}. The discrete multiplier formula proves the same identity and argument for discrete data.

The eigenvalues in \eqref{eq:auxD} are $\lambda_n=(n\pi/L)^2$ and the eigenfunctions are $\sqrt{2/L}\sin(n\pi(z+L)/L)$. Writing $g=\sum_n c_ne_n$, the source hypothesis is exactly $\sum_n\lambda_n^\nu|c_n|^2<\infty$. Thus
$\sum_{n>K}|c_n|^2\le\lambda_{K+1}^{-\nu}\sum_{n>K}\lambda_n^\nu|c_n|^2$, proving \eqref{eq:sourceerror}.
\end{proof}

The three terms in \eqref{eq:error} are the discarded near-horizon tail, approximation error, and noise amplification. The first may include data that were recorded but deliberately omitted by the reconstruction cutoff. The Dirichlet endpoints in \eqref{eq:auxD} define only an approximation basis; they are not boundaries for the wave evolution. Its fractional-domain assumption is explicit because a sharp spatial cutoff need not preserve arbitrary Sobolev regularity. There is no universal relation between $K$ and $\varepsilon$.

\begin{corollary}[A conditional balance at critical time]\label{cor:balance}
Let $f\in X_T$ be observed by $\Oo_{T,T}$ with noise at most $\delta>0$. Assume constants $C_0,C_1,\beta>0$, $0<\nu<1/2$ and $0<\sigma_0<\min(d,T)$ such that, for every $0<\sigma\le\sigma_0$,
\begin{equation}\label{eq:criticalsource}
\|f\|_{L^2(-T,-T+\sigma)}\le C_0\sigma^\beta,
\qquad
\|D_{T-\sigma}^{\nu/2}Q_{T-\sigma}f\|\le C_1.
\end{equation}
Set
\begin{equation}\label{eq:sigmabalance}
\sigma_* =\left(\frac{\delta^2}{2\beta C_0^2}\right)^{1/(2\beta+1)}.
\end{equation}
If $\sigma_*\le\sigma_0$, choose $L=T-\sigma_*$ and an integer $K\ge1$ satisfying
\[
K+1\ge\frac T\pi(C_1/C_0)^{1/\nu}\sigma_*^{-\beta/\nu}.
\]
Then the estimator \eqref{eq:estimator} satisfies
\begin{equation}\label{eq:balancedrate}
\|f-\widehat f_{L,K}\|
\le\sqrt{2+2\beta}\,C_0\sigma_*^\beta
=O\big(\delta^{2\beta/(2\beta+1)}\big).
\end{equation}
The choice \eqref{eq:sigmabalance} minimizes the tail-plus-noise upper bound $C_0^2\sigma^{2\beta}+\delta^2/\sigma$; no minimax optimality is asserted.
\end{corollary}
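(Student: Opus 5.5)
The plan is to apply Theorem~\ref{thm:regularization} with $T$ fixed, $L=T-\sigma_*$ and $g=Q_Lf$. The data are $y=\Oo_{T,T}f+\eta$. Since $f\in X_T\subset X_-$ after zero extension, they coincide with $\Oo_Tf+\eta$, so \eqref{eq:error} applies. Three terms then need controlling.

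\emph{Tail term.} Here $(I-Q_L)f$ is $f$ restricted to $[-T,-T+\sigma_*]$, so the first hypothesis in \eqref{eq:criticalsource} gives $\|(I-Q_L)f\|^2\le C_0^2\sigma_*^{2\beta}$. This needs $\sigma_*\le\sigma_0$, which is assumed.

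\emph{Noise term.} Because $\sigma_*\le\sigma_0<\min(d,T)$, \eqref{eq:AB} gives $A(T,L)=\min\{d,\sigma_*\}=\sigma_*$. The noise term is therefore $\delta^2/\sigma_*$. From \eqref{eq:sigmabalance}, $\delta^2=2\beta C_0^2\sigma_*^{2\beta+1}$, so
\[
\frac{\delta^2}{\sigma_*}=2\beta C_0^2\sigma_*^{2\beta}.
\]

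\emph{Approximation term.} Apply \eqref{eq:sourceerror} with $D_L=D_{T-\sigma_*}$ and the second hypothesis of \eqref{eq:criticalsource} at $\sigma=\sigma_*$:
\[
\|(I-P_K)Q_Lf\|\le \Big(\frac{L}{\pi(K+1)}\Big)^\nu C_1\le \Big(\frac{T}{\pi(K+1)}\Big)^\nu C_1 .
\]
The stated choice of $K$ gives $(T/(\pi(K+1)))^\nu\le (C_0/C_1)\sigma_*^{\beta}$, so this term is at most $C_0\sigma_*^\beta$. The restriction $\nu<1/2$ plays no role in the estimate; it only ensures that the source condition is compatible with the sharp cutoff, and the hypothesis is simply invoked as given. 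Summing the three squared terms gives
\[
\|f-\widehat f_{L,K}\|^2\le(1+1+2\beta)C_0^2\sigma_*^{2\beta},
\]
which is \eqref{eq:balancedrate}. The order $O(\delta^{2\beta/(2\beta+1)})$ follows by substituting \eqref{eq:sigmabalance}.

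\emph{Minimization claim.} The function $\phi(\sigma)=C_0^2\sigma^{2\beta}+\delta^2/\sigma$ is strictly convex on $(0,\infty)$ and tends to $\infty$ at both ends. Its derivative $2\beta C_0^2\sigma^{2\beta-1}-\delta^2\sigma^{-2}$ vanishes exactly at $\sigma_*$.

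The only delicate point is the identification $A(T,L)=\sigma_*$, which requires $\sigma_*<d$; this is secured by $\sigma_0<\min(d,T)$. The remaining steps are bookkeeping, checking that $L>0$ and that the projection $P_K$ is taken on $X_L$ with this $L$.
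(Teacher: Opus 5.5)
Your proposal is correct and follows the paper's proof essentially step for step. You apply Theorem~\ref{thm:regularization} with $L=T-\sigma_*$ and use $A(T,T-\sigma_*)=\sigma_*$. The tail and approximation terms are each bounded by $C_0^2\sigma_*^{2\beta}$, the latter via $L\le T$ and the choice of $K$, and the noise term is handled by $\delta^2/\sigma_*=2\beta C_0^2\sigma_*^{2\beta}$.

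One small correction. The function $\phi$ is not strictly convex on $(0,\infty)$ when $\beta<1/2$: its second derivative $2\beta(2\beta-1)C_0^2\sigma^{2\beta-2}+2\delta^2\sigma^{-3}$ is negative for large $\sigma$. The minimization claim still holds, because $\phi'(\sigma)=\sigma^{-2}\bigl(2\beta C_0^2\sigma^{2\beta+1}-\delta^2\bigr)$ is negative for $\sigma<\sigma_*$ and positive for $\sigma>\sigma_*$.
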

\begin{proof}
Here $A(T,T-\sigma)=\sigma$ because $\sigma\le d$. Theorem~\ref{thm:regularization} bounds the squared error by
$C_0^2\sigma^{2\beta}+C_1^2(T/(\pi(K+1)))^{2\nu}+\delta^2/\sigma$.
Differentiation of the first and third terms gives \eqref{eq:sigmabalance} and
$\delta^2/\sigma_*=2\beta C_0^2\sigma_*^{2\beta}$. The choice of $K$ bounds the middle term by $C_0^2\sigma_*^{2\beta}$.
\end{proof}

The hypotheses \eqref{eq:criticalsource} describe a source class, not a consequence of the metric. The restriction $\nu<1/2$ avoids forcing an endpoint trace condition at every varying cutoff; the displayed fractional-domain bounds are still assumptions. The corresponding radial cutoff is $\varepsilon_{\sigma}=\rho_0e^{-\kappa(T-\sigma)}$. At fixed $T$ it approaches the critical positive radius, not the horizon, as $\sigma\downarrow0$.

\subsection{Continuous time and a spectral-band quadrature estimate}
The preceding finite-resolution spaces are not spectral subspaces of the full-line translation generator. For the latter kind of restriction, one can instead quantify time quadrature.

\begin{proposition}[Time sampling on a spectral band]\label{prop:quadrature}
Let $H$ be self-adjoint on $\Hh$, with spectral measure $E_H$, and define $U(t)=e^{-itH}$. Let $C:\Hh\to Y$ be bounded, $\Omega\ge0$, $T=N\Delta$ with $N\ge1$, and $f\in\ran E_H([-\Omega,\Omega])$. Then
\begin{equation}\label{eq:quadrature}
\left|\int_0^T\|CU(t)f\|^2dt
-\Delta\sum_{k=0}^{N-1}\|CU(k\Delta)f\|^2\right|
\le\|C\|^2\Omega T\Delta\,\|f\|^2.
\end{equation}
Consequently continuous squared bounds $a,b$ on a stated subspace of this spectral band transfer to bounds
$a-\|C\|^2\Omega T\Delta$ and $b+\|C\|^2\Omega T\Delta$ for the displayed discrete map, provided the lower one is positive.
\end{proposition}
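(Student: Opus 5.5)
The plan is to treat the left-endpoint Riemann sum as a quadrature of the function $\phi(t)=\|CU(t)f\|^2$ and to bound its Lipschitz constant on the band. Write
\[
\int_0^T\phi(t)\,dt-\Delta\sum_{k=0}^{N-1}\phi(k\Delta)=\sum_{k=0}^{N-1}\int_{k\Delta}^{(k+1)\Delta}\big(\phi(t)-\phi(k\Delta)\big)\,dt.
\]
Then it suffices to prove
\[
|\phi(t)-\phi(t')|\le 2\|C\|^2\Omega\|f\|^2|t-t'|.
\]
Indeed, integrating $|t-k\Delta|$ over each cell gives $\Delta^2/2$ per cell, hence a total of $N\Delta^2/2\cdot 2\|C\|^2\Omega\|f\|^2=\|C\|^2\Omega T\Delta\|f\|^2$, which is exactly the stated constant.

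For the Lipschitz estimate, use the spectral band. Since $f\in\ran E_H([-\Omega,\Omega])$, the vector $f$ lies in $\Dom(H)$ with $\|Hf\|\le\Omega\|f\|$, and $U(t)f$ stays in the same band. By spectral calculus,
\[
\|U(t)f-U(t')f\|^2=\int|e^{-it\lambda}-e^{-it'\lambda}|^2\,d\|E_H(\lambda)f\|^2\le\Omega^2|t-t'|^2\|f\|^2,
\]
using $|e^{-i\theta}-1|\le|\theta|$ and $|\lambda|\le\Omega$. With $g=CU(t)f$ and $g'=CU(t')f$, we have
\[
\big|\|g\|^2-\|g'\|^2\big|=\big|\|g\|-\|g'\|\big|\,\big(\|g\|+\|g'\|\big)\le\|g-g'\|\cdot 2\|C\|\|f\|\le 2\|C\|^2\Omega|t-t'|\|f\|^2,
\]
where unitarity gives $\|g\|,\|g'\|\le\|C\|\|f\|$. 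One could instead differentiate, since $\phi'(t)=-2\,\mathrm{Re}\,\ip{CU(t)iHf}{CU(t)f}$ on the band, but the difference-quotient argument avoids any need to justify that derivative.

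The consequence then follows by sandwiching. For $f$ in a stated subspace of the band on which $a\|f\|^2\le\int_0^T\phi\le b\|f\|^2$, the discrete squared norm $\Delta\sum_k\phi(k\Delta)$ lies within $\|C\|^2\Omega T\Delta\|f\|^2$ of $\int_0^T\phi$. This gives lower bound $a-\|C\|^2\Omega T\Delta$ and upper bound $b+\|C\|^2\Omega T\Delta$, and the lower one is meaningful only when positive.

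The only delicate point is keeping the constant sharp: the cellwise factor $\Delta^2/2$ has to cancel the factor $2$ from the product $\|g\|+\|g'\|$. Nothing deeper is needed, since the boundedness of $C$ and the spectral cutoff make every step elementary.
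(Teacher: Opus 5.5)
Your proposal is correct and follows the paper's proof essentially verbatim: the spectral-band Lipschitz bound $\|(U(t)-U(s))f\|\le\Omega|t-s|\|f\|$, the resulting estimate $2\|C\|^2\Omega|t-s|\|f\|^2$ for the squared norms, and cellwise integration against the left endpoints giving $\|C\|^2\Omega N\Delta^2\|f\|^2$. The transfer of the bounds then follows by the same sandwich argument you give.
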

\begin{proof}
Spectral calculus gives $\|(U(t)-U(s))f\|\le\Omega|t-s|\|f\|$ and $\|U(t)f\|=\|f\|$. Hence
$|\|CU(t)f\|^2-\|CU(s)f\|^2|\le2\|C\|^2\Omega|t-s|\|f\|^2$.
Integrating this estimate with $s=k\Delta$ over each cell and summing gives $\|C\|^2\Omega N\Delta^2\|f\|^2$, which is \eqref{eq:quadrature}.
\end{proof}

This proposition uses the left-endpoint grid $0,\ldots,N-1$, whereas Theorem~\ref{thm:discrete} includes the endpoint $N\Delta$. Adding that endpoint to \eqref{eq:quadrature} adds at most $\Delta\|C\|^2\|f\|^2$ to the error estimate. On the full line, finite spectral bandwidth generally gives an infinite-dimensional subspace. A nonzero compactly supported function cannot be exactly Fourier-bandlimited: its compact-support Fourier transform extends to an entire function, which cannot vanish on an open real interval without vanishing identically. Thus the spatial-support assumptions of Section~\ref{sec:exact} and exact band limitation are separate models. A finite-dimensional spectral calculation, including aliasing, is given in Appendix~\ref{app:galerkin}.

\section{A separate detector-attenuation model}\label{sec:gain}
Suppose that the recorded amplitude in \eqref{eq:Odef} is multiplied by a specified response $e^{-\gamma t}$, with $\gamma>0$. Let this bounded observation be denoted by $\Oo_{\gamma,T,L}$. The wave still evolves by $V(t)$.

\begin{proposition}[Gain-weighted Gramian]\label{prop:gain}
The Gramian is multiplication by
\begin{equation}\label{eq:gainweight}
w_{\gamma,T}(z)=e^{2\gamma z}
\frac{1-e^{-2\gamma\min(d,(T+z)_+)}}{2\gamma},\qquad -L\le z\le0.
\end{equation}
The optimal squared bounds are $w_{\gamma,T}(-L)$ and $w_{\gamma,T}(0)$. In particular, stable recovery holds exactly when $T>L$. Under the full-traversal condition $T\ge L+d$, put
$c_{\gamma,d}=(1-e^{-2\gamma d})/(2\gamma)$. Then
\begin{equation}\label{eq:gainbounds}
A_\gamma=c_{\gamma,d}e^{-2\gamma L},\qquad
B_\gamma=c_{\gamma,d},\qquad
\cond(\Oo_{\gamma,T,L})=e^{\gamma L}
=\left(\frac{\rho_0}{\varepsilon}\right)^{\gamma/\kappa}.
\end{equation}
For $T>L$, the canonical inverse is $M_{1/w_{\gamma,T}}\Oo_{\gamma,T,L}^*$ and has norm $A_\gamma^{-1/2}$, where for incomplete traversal $A_\gamma=w_{\gamma,T}(-L)$.
\end{proposition}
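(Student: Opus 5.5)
The plan is to repeat the Tonelli computation from the proof of Theorem~\ref{thm:exact}, now with the weight $e^{-2\gamma t}$ inside the time integral. The observation is $(\Oo_{\gamma,T,L}f)(t,s)=e^{-\gamma t}f(s-t)$ on $(0,T)\times I$. Substituting $z=s-t$ gives
\[
\|\Oo_{\gamma,T,L}f\|^2=\int_{-L}^0|f(z)|^2\int_0^T e^{-2\gamma t}\ind_{[0,d]}(z+t)\,dt\,dz .
\]
For $-L\le z\le 0$ the $t$-range is $[-z,\min(T,d-z)]$, nonempty exactly when $T+z>0$. Integrating the exponential over it gives $e^{2\gamma z}\bigl(1-e^{-2\gamma\min(d,(T+z)_+)}\bigr)/(2\gamma)$, which is \eqref{eq:gainweight}. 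Polarization then identifies the Gramian with $M_{w_{\gamma,T}}$.

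Next I would show that $w_{\gamma,T}$ is nondecreasing in $z$ on $[-L,0]$. The function is the integral of a fixed positive density $e^{-2\gamma t}$ over $[-z,\min(T,d-z)]$, so it is cleaner to argue with the integral than with the closed form. In the region where $\min(T,d-z)=d-z$, I would rewrite it as $e^{2\gamma z}c_{\gamma,d}$, which is increasing. In the region where the cap is $T$, the interval $[-z,T]$ grows as $z$ increases, so the weight increases there too. Both regimes together give monotonicity, and the weight is continuous across the switch at $z=T-d$. The essential infimum and supremum are therefore the endpoint values $w_{\gamma,T}(-L)$ and $w_{\gamma,T}(0)$. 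Optimality follows, as in Theorem~\ref{thm:exact}, from normalized functions concentrated near the endpoints, using continuity of $w_{\gamma,T}$. Moreover, $w_{\gamma,T}(-L)>0$ if and only if $(T-L)_+>0$. For $T\le L$ the weight vanishes at $-L$ and is continuous, so the map is not bounded below; this gives the claim that stable recovery holds exactly when $T>L$.

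Under $T\ge L+d$, the minimum in \eqref{eq:gainweight} equals $d$ for every $z\ge -L$. Hence $w_{\gamma,T}(z)=c_{\gamma,d}e^{2\gamma z}$, which yields $A_\gamma$ and $B_\gamma$ directly. The condition number is $\sqrt{B_\gamma/A_\gamma}=e^{\gamma L}$, and \eqref{eq:Ld} converts it to $(\rho_0/\varepsilon)^{\gamma/\kappa}$. For the inverse with $T>L$, Lemma~\ref{lem:inverse} gives the canonical left inverse $(\Oo^*\Oo)^{-1}\Oo^*=M_{1/w_{\gamma,T}}\Oo_{\gamma,T,L}^*$, with norm $A_\gamma^{-1/2}$ where $A_\gamma=\essinf w_{\gamma,T}=w_{\gamma,T}(-L)$.

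The only step that needs care is the monotonicity across the two regimes for partial traversal. I expect it to reduce cleanly to the interval-growth argument, so there is no real obstacle.
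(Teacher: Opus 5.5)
Your proposal is correct and follows the paper's own route. You use the Tonelli change of variables $z=s-t$ to get $w_{\gamma,T}(z)=\int_{[0,T]\cap[-z,d-z]}e^{-2\gamma t}\,dt$, show monotonicity of the partial- and full-traversal pieces with continuity at the junction $z=T-d$, take the endpoint values as essential extrema, and invoke Lemma~\ref{lem:inverse} for the canonical inverse. Your interval-growth argument in the partial-traversal regime replaces the paper's closed-form observation that $(e^{2\gamma z}-e^{-2\gamma T})/(2\gamma)$ is increasing. It also handles the unobserved range $z\le -T$, where $w_{\gamma,T}=0$, without separate comment.
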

\begin{proof}
The change of variables in Theorem~\ref{thm:exact} now gives
$w_{\gamma,T}(z)=\int_{[0,T]\cap[-z,d-z]}e^{-2\gamma t}dt$, which evaluates to \eqref{eq:gainweight}. On a partially traversed characteristic it equals $(e^{2\gamma z}-e^{-2\gamma T})/(2\gamma)$; on a fully traversed one it equals $c_{\gamma,d}e^{2\gamma z}$. Both are increasing in $z$, and the pieces agree at their junction. Essential extrema therefore give the asserted optimal bounds. Full traversal yields \eqref{eq:gainbounds}, and Lemma~\ref{lem:inverse} gives the inverse.
\end{proof}

The exponent $\gamma/\kappa$ combines logarithmic optical travel time with an independently chosen detector law. There is no identification of a freely chosen power with surface gravity or a temperature. Even in the full-traversal regime, the growing condition number in \eqref{eq:gainbounds} describes this recording response, not dissipation by Rindler evolution.

\section{Schwarzschild near-horizon comparison}\label{sec:schwarzschild}
\subsection{Scalar reduction and the exponentially small potential}
Consider the Schwarzschild exterior with $M>0$,
\begin{equation}\label{eq:schwarzmetric}
g=-F(r)\,dt^2+F(r)^{-1}\,dr^2+r^2d\Omega^2,
\qquad F(r)=1-\frac{2M}{r},\quad r>2M.
\end{equation}
Here $t$ has the usual asymptotic Killing normalization. Fix an angular momentum $\ell\in\N_0$ and one normalized spherical harmonic $Y_{\ell j}$, with $-\Delta_{\mathbb S^2}Y_{\ell j}=\ell(\ell+1)Y_{\ell j}$. The index $j$ will not enter the radial estimates.

\begin{proposition}[Scalar radial operator]\label{prop:Schwarzoperator}
Under the substitution $\Phi(t,r,\omega)=r^{-1}\psi(t,x)Y_{\ell j}(\omega)$, where
\begin{equation}\label{eq:tortoise}
x=r_*=r+2M\log\left(\frac r{2M}-1\right),
\end{equation}
the equation $\Box_g\Phi=0$ becomes
\begin{equation}\label{eq:scalarSchwarz}
\psi_{tt}-\psi_{xx}+V_\ell(x)\psi=0,\qquad
V_\ell(x)=F(r(x))\left(\frac{\ell(\ell+1)}{r(x)^2}+\frac{2M}{r(x)^3}\right).
\end{equation}
On $L^2(\R,dx)$, $L_\ell=-\partial_x^2+V_\ell$ is nonnegative self-adjoint with domain $H^2(\R)$ and form domain $H^1(\R)$. Its square root and wave group are defined by the spectral construction of Proposition~\ref{prop:waveenergy}.

For $\kappa=1/(4M)$, define the explicit constants
\begin{equation}\label{eq:Cl}
C_\ell=\frac{\ell(\ell+1)+1}{4eM^2},\qquad
\overline v_\ell=\frac{\ell(\ell+1)+1}{4M^2}.
\end{equation}
Then, for all $x\in\R$,
\begin{equation}\label{eq:Vbounds}
0<V_\ell(x)\le\overline v_\ell,\qquad
V_\ell(x)\le C_\ell e^{2\kappa x}.
\end{equation}
Moreover $V_\ell(x)=C_\ell e^{2\kappa x}(1+O(e^{2\kappa x}))$ as $x\to-\infty$.
\end{proposition}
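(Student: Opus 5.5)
The proof splits into three parts: the reduction to the radial equation, self-adjointness of $L_\ell$, and the potential bounds.

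\emph{Reduction.} I would use the coordinate formula $\Box_g\Phi=|g|^{-1/2}\partial_\mu(|g|^{1/2}g^{\mu\nu}\partial_\nu\Phi)$ with $|g|^{1/2}=r^2\sin\theta$. This gives
\[
\Box_g\Phi=-F^{-1}\Phi_{tt}+r^{-2}\partial_r(r^2F\partial_r\Phi)+r^{-2}\Delta_{\mathbb S^2}\Phi.
\]
The tortoise coordinate \eqref{eq:tortoise} satisfies $dx/dr=F^{-1}$, so $\partial_x=F\partial_r$. Substituting $\Phi=r^{-1}\psi Y_{\ell j}$, I compute
\[
\partial_r\bigl(r^2F\partial_r(r^{-1}\psi)\bigr)=r\,\partial_r(F\psi_r)-\psi F'.
\]
With $F'=2M/r^2$, multiplying the equation by $-rF$ yields \eqref{eq:scalarSchwarz}.

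\emph{Self-adjointness.} The form is $\int(|u'|^2+V_\ell|u|^2)$. Since $0\le V_\ell$ is bounded, this form is a bounded nonnegative perturbation of the $H^1$ form, hence closed on $H^1$. The operator $-\partial_x^2$ is self-adjoint on $H^2$, and $V_\ell$ is a bounded symmetric perturbation. Kato--Rellich (or simply boundedness) then gives self-adjointness of $L_\ell$ on $H^2$ with form domain $H^1$. Nonnegativity follows from the form. Spectral calculus then gives $H_\ell=L_\ell^{1/2}$ and the wave group exactly as in Proposition~\ref{prop:waveenergy}.

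\emph{Potential bounds.} Write $\lambda=\ell(\ell+1)$ and $r>2M$. Positivity is clear because $F>0$. For the uniform bound, I would use $F<1$, $r^{-2}<(4M^2)^{-1}$, and $2M/r^3<2M/(8M^3)=1/(4M^2)$. This gives
\[
V_\ell<\frac{\lambda+1}{4M^2}=\overline v_\ell.
\]
For the exponential bound, exponentiating \eqref{eq:tortoise} with $2\kappa=1/(2M)$ gives
\[
e^{2\kappa x}=e^{r/(2M)}\Bigl(\frac r{2M}-1\Bigr)=e^{r/(2M)}\,\frac{rF}{2M}.
\]
Hence $F=2M e^{2\kappa x}e^{-r/(2M)}/r$, and
\[
V_\ell=e^{2\kappa x}\,g(r),\qquad g(r)=2Me^{-r/(2M)}\bigl(\lambda r^{-3}+2Mr^{-4}\bigr).
\]
The function $g$ is decreasing on $(2M,\infty)$, so $g(r)<g(2M)=e^{-1}(\lambda+1)/(4M^2)=C_\ell$.

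\emph{Asymptotics.} As $x\to-\infty$ we have $r\to2M$. The relation $e^{2\kappa x}=e^{r/(2M)}(r/2M-1)$ shows $r-2M=O(e^{2\kappa x})$. Since $g$ is smooth at $2M$, $g(r)=C_\ell(1+O(r-2M))=C_\ell(1+O(e^{2\kappa x}))$.

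The main point of care is the monotonicity of $g$ together with the identification $g(2M)=C_\ell$; both are direct calculations once the identity for $F$ in terms of $e^{2\kappa x}$ is written down.
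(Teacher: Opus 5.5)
Your proposal is correct and essentially follows the paper's proof: the same divergence-form reduction with $\partial_x=F\partial_r$, and the same bounded-perturbation argument for self-adjointness on $H^2$ with form domain $H^1$. For the exponential bound, your factor $g(r)=2Me^{-r/(2M)}\bigl(\ell(\ell+1)r^{-3}+2Mr^{-4}\bigr)$ is exactly the paper's expression $e^{-1-y}\bigl(\ell(\ell+1)(1+y)^{-3}+(1+y)^{-4}\bigr)/(4M^2)$ with $y=r/(2M)-1$, bounded by its horizon value $C_\ell$; likewise your asymptotic step via smoothness of $g$ at $r=2M$ and $r-2M=O(e^{2\kappa x})$ is equivalent to the paper's substitution $y=e^{2\kappa x-1}(1+O(e^{2\kappa x}))$.
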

\begin{proof}
The scalar wave operator in \eqref{eq:schwarzmetric} is
\[
\Box_g\Phi=-F^{-1}\Phi_{tt}
+r^{-2}\partial_r(r^2F\partial_r\Phi)+r^{-2}\Delta_{\mathbb S^2}\Phi.
\]
After substitution and multiplication by $rF$, the radial derivative terms are $F^2\psi_{rr}+FF'\psi_r$, while the zeroth-order term is
$-F(F'/r+\ell(\ell+1)/r^2)\psi$. Since $dx/dr=F^{-1}$ and $F'=2M/r^2$, this proves \eqref{eq:scalarSchwarz}. The tortoise coordinate maps $(2M,\infty)$ onto $\R$. The real bounded potential is a bounded self-adjoint perturbation of the full-line Laplacian, so its domain is $H^2$; the form proves nonnegativity and gives domain $H^1$. Both infinite ends remain part of the full-line problem, with no reflecting cutoff.

The first bound in \eqref{eq:Vbounds} follows from $F<1$ and $r>2M$. For the sharper horizon estimate set $y=r/(2M)-1>0$. Then
\[
2\kappa x=1+y+\log y,\qquad
V_\ell(x)=\frac{y}{4M^2}
\left(\frac{\ell(\ell+1)}{(1+y)^3}+\frac1{(1+y)^4}\right).
\]
The relation $y=e^{2\kappa x-1-y}\le e^{2\kappa x-1}$ gives the second bound. As $x\to-\infty$, it also gives
$y=e^{2\kappa x-1}(1+O(e^{2\kappa x}))$; substitution yields the asymptotic with the displayed leading constant.
\end{proof}

This is the scalar effective potential, not the gravitational Regge--Wheeler potential. The scalar energy and global black-hole wave problem are discussed in \cite{DR}; the derivation above fixes the convention needed here.

To translate the apparatus, set $s=x+a$, with $a\ge0$, and
\begin{equation}\label{eq:translatedV}
V_a(s)=V_\ell(s-a),\qquad L_a=-\partial_s^2+V_a(s).
\end{equation}
The initial characteristic interval remains $[-L,0]$ and the detector remains $[0,d]$ in $s$. In $x$ both move toward $-\infty$. Although $\|V_a\|_\infty=\|V_\ell\|_\infty$ does not tend to zero, on the causal half-line relevant up to time $T$ we have
\begin{equation}\label{eq:deltaa}
\delta_a:=\sup_{s\le T}V_a(s)
\le C_\ell e^{2\kappa T}e^{-2\kappa a}.
\end{equation}
The distinction between local smallness and a globally small bounded perturbation is essential.

\subsection{Anchored finite-energy data without a zero-integral condition}
Let $\Ee_a$ be the completion of $C_c^\infty(\R)\times C_c^\infty(\R)$ in the norm
\begin{equation}\label{eq:energyspacea}
\|(u,v)\|_{\Ee_a}^2
=\int_\R\big(|v|^2+|u'|^2+V_a|u|^2\big)\,ds.
\end{equation}
This is the conserved Killing-energy norm for the reduced equation, up to the factor $1/2$. In particular it is not an assertion about a nondegenerate energy norm across the horizon.

To check the physical normalization, the scalar Killing energy of one angular mode, after the substitution $\Phi=r^{-1}\psi Y_{\ell j}$, is
\[
\frac12\int_\R\left(|\psi_t|^2+
|\psi_x-(F/r)\psi|^2+
\frac{F\ell(\ell+1)}{r^2}|\psi|^2\right)dx.
\]
For compact data, integration of the cross term uses
$\partial_x(F/r)+(F/r)^2=FF'/r$ and gives one half of \eqref{eq:energyspacea}. Equality extends to the completed energy space by density. For the anchored data below, the boundary term $(F/r)|\psi|^2$ vanishes at the horizon end and on the outer side.

\begin{lemma}[Energy completion and finite propagation]\label{lem:energycompletion}
The wave group of $L_a$ extends uniquely to a strongly continuous unitary group on $\Ee_a$. Its displacement has a unique locally $H^1$ representative, its velocity belongs to $L^2$, and it solves
$u_{tt}-u_{ss}+V_au=0$ in distributions. If initial displacement and velocity vanish for $s>b$, the solution vanishes for $s>b+t$ for $t\ge0$.

For every $f\in X_L$, define
\begin{equation}\label{eq:anchoreddata}
u_f(s)=\begin{cases}\displaystyle\int_s^0 f(z)\,dz,&s\le0,\\0,&s>0,\end{cases}
\qquad v_f=f,
\end{equation}
where $f$ is zero outside $[-L,0]$. This pair belongs to $\Ee_a$, satisfies $u_f'=-f$, and has $q_+(0)=f$, $q_-(0)=0$. With
\begin{equation}\label{eq:ca}
b_L=\frac{1-e^{-2\kappa L}}{4\kappa^2},\qquad
c_a=\left(2+C_\ell b_L e^{-2\kappa a}\right)^{1/2},
\end{equation}
its energy obeys
\begin{equation}\label{eq:initialenergy}
2\|f\|^2\le\|(u_f,f)\|_{\Ee_a}^2\le c_a^2\|f\|^2.
\end{equation}
\end{lemma}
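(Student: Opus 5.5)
The plan has three parts: identify $\Ee_a$ concretely, transfer the spectral wave group to it, and then verify the anchored data and the energy bounds.

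\emph{Identifying $\Ee_a$.} Since $V_a>0$ is bounded, the norm \eqref{eq:energyspacea} controls $\|u'\|_2$ and $\|\sqrt{V_a}u\|_2$. On compact intervals, where $V_a$ is bounded below by a positive constant, it also controls $u$ in $L^2_{\rm loc}$. A Cauchy sequence therefore converges in $H^1_{\rm loc}$, which gives the unique locally $H^1$ representative; the velocities converge in $L^2$.

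\emph{Extending the group.} On $H^1\times L^2$ the norm \eqref{eq:energyspacea} equals $\|H_au\|^2+\|v\|^2$ with $H_a=L_a^{1/2}$. Proposition~\ref{prop:waveenergy}, applied to $L_a$ via Proposition~\ref{prop:Schwarzoperator}, gives conservation of this quantity under the rotation \eqref{eq:rotation}. The group is thus isometric on the dense set $C_c^\infty\times C_c^\infty$, which lies in $H^1\times L^2$, and extends uniquely to the completion. It is invertible there because $U(-t)$ extends as well, so it is unitary. Strong continuity holds on the dense set and passes to the completion by uniform boundedness.

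\emph{Equation and finite propagation.} The distributional equation passes to limits, because $u_n\to u$ in $L^2_{\rm loc}$ and $u_{n,t}\to u_t$ in $L^2$, uniformly on compact time intervals. For finite propagation, apply the standard local energy identity on the region $\{s>b+t\}$ to smooth data. The flux through the line $s=b+t$ is nonnegative since $V_a\ge0$, so energy cannot enter the region. Then approximate general data with support in $s\le b$ by data with support in $s\le b+\eta$ and let $\eta\downarrow0$; the localization is preserved by the local convergence. This density-and-support step is the main technical point. I would handle it with mollification plus the one-sided support, which is preserved by convolution with a kernel supported in $[-\eta,0]$.

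\emph{Anchored data.} The function $u_f$ is absolutely continuous with $u_f'=-f$, vanishes for $s>0$, and is constant, equal to $c=\int_{-L}^0 f$, for $s<-L$. Since $q_+=(v-u')/2=f$ and $q_-=(v+u')/2=0$, the characteristic identities hold. To show $(u_f,f)\in\Ee_a$, I would approximate $u_f$ by $u_f\chi_R$, where $\chi_R$ is a cutoff equal to $1$ on $[-R,\infty)$ with slope $O(1/R)$. The constant tail contributes $\int V_a|c|^2$ on $s<-L$. This is finite because $V_a(s)\le C_\ell e^{-2\kappa a}e^{2\kappa s}$ by \eqref{eq:Vbounds}, and that integrability is exactly the mechanism that avoids a zero-integral condition. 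The cutoff derivative contributes $|c|^2\int|\chi_R'|^2=O(1/R)\to0$. Finally mollify to reach $C_c^\infty$ pairs.

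\emph{Energy bounds.} The norm equals $\|f\|^2+\|f\|^2+\int V_a|u_f|^2$, which gives the lower bound $2\|f\|^2$ immediately. For the upper bound, Cauchy--Schwarz gives $|u_f(s)|^2\le\min(|s|,L)\,\|f\|^2$. Together with $V_a\le C_\ell e^{-2\kappa a}e^{2\kappa s}$, the remaining task is to bound
\[
\int_{-\infty}^0 e^{2\kappa s}\min(|s|,L)\,ds .
\]
A direct computation gives exactly $(1-e^{-2\kappa L})/(4\kappa^2)=b_L$: integrate $\int_{-L}^0|s|e^{2\kappa s}\,ds$ by parts and add the tail $Le^{-2\kappa L}/(2\kappa)$. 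This yields $c_a^2$.
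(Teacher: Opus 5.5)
Most of your plan coincides with the paper's proof. The local $H^1$ identification of $\Ee_a$, the constant-tail cutoff for $u_f$ and the computation $\int_{-\infty}^0 e^{2\kappa s}\min(|s|,L)\,ds=b_L$ are the same. Extending the group as an isometry from the dense subspace $H^1\times L^2$ is an acceptable substitute for the paper's identification $(u,v)\mapsto(H_au,v)$ of $\Ee_a$ with $L^2\oplus L^2$. The gap is in the step you yourself flag as the main technical point: passing finite propagation from smooth compactly supported data to arbitrary data in $\Ee_a$ that vanish on $(b,\infty)$.

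A displacement in $\Ee_a$ need not be in $L^2$, or even bounded, as $s\to-\infty$. Only $u'\in L^2$ and $\sqrt{V_a}\,u\in L^2$ are known, and $V_a$ decays like $e^{2\kappa s}$. Hence a constant tail, or an unbounded one such as $\log|s|$, is allowed.

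Mollification with a kernel supported in $[-\eta,0]$ keeps the support to the left of $b$, but it does nothing at $-\infty$. The mollified displacement is smooth but neither compactly supported nor in $H^1$. You therefore have no $C_c^\infty$ data to which the classical local energy identity applies. You also have no regularity information about the abstractly extended evolution of such data. Some argument at the left end is indispensable.

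The paper supplies it as follows. Splitting the primitive at a fixed point and using smallness of $\|u'\|_{L^2}$ on the tail shows $u(s)=o(\sqrt{|s|})$. Consequently a cutoff between $-2R$ and $-R$ with slope $O(1/R)$ adds vanishing derivative energy and removes vanishing potential energy. Ordinary $H^1$ approximation with zero trace at $b$ then gives smooth data supported left of $b$. Your cutoff argument for $u_f$ covers only a constant tail, where $|c|^2\int|\chi_R'|^2=O(1/R)$ is immediate; the general case needs this growth bound.

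Alternatively, you can bypass the left end entirely. Take $C_c^\infty$ pairs $(u_n,v_n)\to(u,v)$ in $\Ee_a$, which exist by definition of the completion. Multiply them by a fixed smooth $\theta$ equal to $1$ on $(-\infty,b+\eta/2]$ and to $0$ on $[b+\eta,\infty)$. Since $u$ and $v$ vanish on $(b,\infty)$, we have $\theta u=u$ and $\theta v=v$, hence $(\theta u_n)'-u'=\theta(u_n'-u')+\theta'(u_n-u)$. The last term tends to zero by the local $H^1$ convergence you already established, because $\theta'$ has compact support. The potential and velocity errors are $\theta\sqrt{V_a}(u_n-u)$ and $\theta(v_n-v)$. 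Thus $(\theta u_n,\theta v_n)$ are $C_c^\infty$ data supported in $s\le b+\eta$ that converge to $(u,v)$ in $\Ee_a$. After this, your limiting argument with $\eta\downarrow0$ goes through.
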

\begin{proof}
On every compact interval $K$, the potential has a positive minimum, so
$\|u\|_{H^1(K)}^2\le (1+1/\min_K V_a)\int(|u'|^2+V_a|u|^2)$.
Consequently energy-Cauchy displacements have consistent locally $H^1$ limits; their derivatives and their products with $\sqrt{V_a}$ converge globally in $L^2$. This also proves uniqueness of the representative. The spectral rotation \eqref{eq:rotation}, with $H_a=L_a^{1/2}$, defines the unitary extension. Its energy-coordinate generator has domain $\Dom(H_a)\oplus\Dom(H_a)=H^1\oplus H^1$ and is the skew-adjoint matrix stated in Proposition~\ref{prop:waveenergy}. Test-function limits prove the distributional equation. Thus no expression involving $H_a u$ as an $L^2$-operator on a non-$L^2$ displacement is needed.

For smooth compact data, let $e=(|u_t|^2+|u_s|^2+V_a|u|^2)/2$. The local identity is
$\partial_t e=\partial_s\operatorname{Re}(u_t\overline{u_s})$.
Differentiating the energy on $(b+t,\infty)$ gives
\[
\frac d{dt}\int_{b+t}^\infty e\,ds
=-\frac12|u_t+u_s|^2(t,b+t)-\frac12V_a(b+t)|u(t,b+t)|^2\le0.
\]
Zero initial exterior energy therefore remains zero. Energy approximation extends this conclusion to completed data supported on the same half-line; the local $H^1$ convergence preserves the zero representative there. For completeness, an energy displacement with $u'\in L^2$ satisfies $u(s)=o(\sqrt{|s|})$ as $s\to-\infty$: split its primitive at a fixed negative point and make the derivative norm in the remaining tail arbitrarily small. A cutoff between $-2R$ and $-R$ therefore has vanishing added derivative energy, while its potential-energy error tends to zero by integrability. After this cutoff, ordinary $H^1$ approximation with zero trace at $b$ supplies smooth data supported to the left of $b$. Velocities are approximated in $L^2$ on the same half-line.

For \eqref{eq:anchoreddata}, $u_f$ is continuous, locally $H^1$, constant on $(-\infty,-L)$, and zero on $(0,\infty)$. Cauchy--Schwarz gives
$|u_f(s)|^2\le\min(L,-s)\|f\|^2$ for $s\le0$. Thus
\[
\int V_a|u_f|^2\,ds
\le C_\ell e^{-2\kappa a}\|f\|^2
\int_{-\infty}^0e^{2\kappa s}\min(L,-s)\,ds
=C_\ell b_L e^{-2\kappa a}\|f\|^2.
\]
To verify membership in the completion, cut the constant left tail to zero between $-2R$ and $-R$, for $R>L$, with derivative bounded by a constant times $R^{-1}$. The added squared derivative norm is $O(R^{-1}|\int f|^2)$, and the potential tail tends to zero by \eqref{eq:Vbounds}. The resulting compactly supported $H^1$ functions can be smoothed in the form norm, while $f$ is approximated in $L^2$. This proves membership. Finally $u_f'=-f$ gives \eqref{eq:initialenergy} and the characteristic data.
\end{proof}

If $\int f\ne0$, then $u_f\notin L^2(\R)$, but its potential energy is finite. Requiring an $L^2$ displacement compactly supported in $[-L,0]$ would unnecessarily impose $\int f=0$. The anchoring in \eqref{eq:anchoreddata} is part of the initial-data model: it excludes an independent constant displacement parameter. The theorem below reconstructs this family of prepared energy data, not arbitrary pairs in $\Ee_a$.

\subsection{Observation-operator comparison and reconstruction}
Let $u_a$ be the energy solution with data \eqref{eq:anchoreddata}, and define
\begin{equation}\label{eq:Oa}
q_a(t,s)=\frac{u_{a,t}-u_{a,s}}2,
\qquad (\Oo_af)(t,s)=q_a(t,s)\big|_{(0,T)\times[0,d]}.
\end{equation}
This is a bounded map $X_L\to Y_T$, since energy conservation bounds $q_a$ in $C([0,T];L^2)$. The potential can generate an incoming component for $t>0$; only the initial data are assumed to be purely outgoing.
Define
\begin{align}
F_\kappa(T)&=\left(\int_0^T(e^{2\kappa t}-1)^2dt\right)^{1/2}
=\left(\frac{e^{4\kappa T}-1}{4\kappa}
-\frac{e^{2\kappa T}-1}{\kappa}+T\right)^{1/2},\label{eq:Fkappa}\\
D_a&=\frac{c_a C_\ell}{16\kappa^2}e^{-2\kappa a},\qquad
h_a=D_a F_\kappa(T).\label{eq:ha}
\end{align}

\begin{theorem}[Finite-energy Schwarzschild comparison]\label{thm:Schwarzcomparison}
Fix $M>0$, $\ell\in\N_0$, and $L,d,T>0$. For every $a\ge0$ and every $f\in X_L$, with no condition on its integral,
\begin{equation}\label{eq:pointcomparison}
\|q_a(t)-V(t)f\|_{L^2(\R)}
\le D_a(e^{2\kappa t}-1)\|f\|,
\qquad 0\le t\le T.
\end{equation}
Consequently
\begin{equation}\label{eq:Ocomparison}
\|\Oo_a-\Oo_{T,L}\|_{\mathcal B(X_L,Y_T)}\le h_a
=O(e^{-2\kappa a}).
\end{equation}
If $T>L$ and $h_a<\sqrt{A(T,L)}$, the optimal squared bounds $A_a,B_a$ of $\Oo_a$ satisfy
\begin{equation}\label{eq:Schwarzbounds}
A_a\ge(\sqrt{A(T,L)}-h_a)^2>0,
\qquad B_a\le(\sqrt{B(T,L)}+h_a)^2.
\end{equation}
The canonical inverse $R_a=(\Oo_a^*\Oo_a)^{-1}\Oo_a^*$ reconstructs exactly and stably, with
\begin{equation}\label{eq:Schwarzinversebound}
\|R_a\|\le\frac1{\sqrt{A(T,L)}-h_a},\qquad
\cond(\Oo_a)\le
\frac{\sqrt{B(T,L)}+h_a}{\sqrt{A(T,L)}-h_a}.
\end{equation}
The constants in \eqref{eq:ha} and \eqref{eq:Schwarzbounds} are explicit sufficient constants, not asserted to be optimal.
\end{theorem}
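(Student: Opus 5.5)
The plan is to write the reduced equation in characteristic variables, treat the potential as a source, and estimate that source with finite propagation and energy conservation. For smooth compactly supported data, $u_{tt}-u_{ss}=-V_au$ together with \eqref{eq:qdef} gives
\[
(\partial_t+\partial_s)q_a=-\tfrac12 V_au,\qquad (\partial_t-\partial_s)q_{a,-}=-\tfrac12V_au .
\]
Since $q_a(0)=f$, Duhamel's formula along outgoing characteristics, i.e.\ for the group $V(t)$ of Proposition~\ref{prop:characteristics}, yields
\[
q_a(t)=V(t)f-\frac12\int_0^tV(t-\tau)\big(V_au(\tau)\big)\,d\tau .
\]
Because $V(t-\tau)$ is unitary on $L^2(\R)$, this gives
\[
\|q_a(t)-V(t)f\|\le\frac12\int_0^t\|V_au(\tau)\|_{L^2}\,d\tau .
\]
The whole proof then reduces to a weighted bound on $V_au(\tau)$. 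The first step is to establish this identity for the anchored data \eqref{eq:anchoreddata}. I would approximate $(u_f,f)$ in $\Ee_a$ by smooth compact pairs, as in the membership part of Lemma~\ref{lem:energycompletion}. Both sides then converge: $q_a$ converges in $C([0,T];L^2)$ by energy conservation, and $V_au$ converges in $L^1(0,T;L^2)$ by the pointwise bound below, which is uniform along the approximating sequence.

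The key estimate is pointwise control of the displacement. The anchored data vanish for $s>0$, so Lemma~\ref{lem:energycompletion} shows that $u(\tau,\cdot)$ vanishes for $s>\tau$. Energy conservation and \eqref{eq:initialenergy} give $\|u_s(\tau)\|_2\le\|(u_f,f)\|_{\Ee_a}\le c_a\|f\|$. Integrating from the right and applying Cauchy--Schwarz,
\[
|u(\tau,s)|^2\le(\tau-s)\,c_a^2\|f\|^2,\qquad s\le\tau .
\]
By \eqref{eq:Vbounds}, $V_a(s)\le C_\ell e^{-2\kappa a}e^{2\kappa s}$. Combining the two bounds,
\[
\|V_au(\tau)\|^2\le c_a^2C_\ell^2e^{-4\kappa a}\|f\|^2\int_{-\infty}^\tau e^{4\kappa s}(\tau-s)\,ds
=\frac{c_a^2C_\ell^2e^{-4\kappa a}e^{4\kappa\tau}}{16\kappa^2}\|f\|^2 ,
\]
so $\|V_au(\tau)\|\le c_aC_\ell e^{-2\kappa a}e^{2\kappa\tau}\|f\|/(4\kappa)$. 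Inserting this into the Duhamel bound and integrating $\tfrac12\int_0^te^{2\kappa\tau}\,d\tau$ gives exactly $D_a(e^{2\kappa t}-1)\|f\|$, which is \eqref{eq:pointcomparison}.

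The remaining assertions follow directly. Restricting to $[0,d]$ decreases the $L^2$ norm, and by \eqref{eq:Odef} the difference $\Oo_af-\Oo_{T,L}f$ is the restriction of $q_a(t)-V(t)f$. Squaring \eqref{eq:pointcomparison} and integrating over $(0,T)$ gives the bound $D_aF_\kappa(T)\|f\|=h_a\|f\|$, which is \eqref{eq:Ocomparison}. Since $c_a$ is bounded in $a$, this is $O(e^{-2\kappa a})$. For $T>L$, Theorem~\ref{thm:exact} supplies the optimal bounds $A(T,L)>0$ and $B(T,L)$. Lemma~\ref{lem:inverse} with $h=h_a<\sqrt{A(T,L)}$ then gives \eqref{eq:Schwarzbounds}, closed range, the canonical inverse $R_a$ with $\|R_a\|=A_a^{-1/2}$, and the bounds \eqref{eq:Schwarzinversebound}.

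I expect the main obstacle to be justifying the Duhamel identity for energy solutions whose displacement is not in $L^2$ when $\int f\ne0$. The approach is to derive it for smooth compact data, where it is classical. One then passes to the limit using only quantities controlled by the $\Ee_a$ norm: $u_s$ and $u_t$ in $L^2$, and $u$ locally through its vanishing for $s>\tau$ together with the derivative bound. This avoids ever applying $L_a$ or $H_a$ to a non-$L^2$ displacement.
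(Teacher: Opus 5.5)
Your proposal is correct and follows the paper's proof essentially step for step. Finite propagation and anchoring give $|u_a(t,s)|^2\le(t-s)c_a^2\|f\|^2$; the exponential potential bound with $\int_{-\infty}^t e^{4\kappa s}(t-s)\,ds=e^{4\kappa t}/(16\kappa^2)$ gives the forcing estimate; and the $L^2$ Duhamel formula for $(\partial_t+\partial_s)q_a=-V_au_a/2$ with unitarity of $V(t)$ yields the same constant $D_a$, after which Lemma~\ref{lem:inverse} and Theorem~\ref{thm:exact} finish the argument.

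The one small difference is in how you justify the limit $V_au_n\to V_au$. A uniform pointwise bound by itself gives domination, not convergence. The paper instead uses $\|V_aw\|_2\le\|V_a\|_\infty^{1/2}\|\sqrt{V_a}w\|_2$, so $V_au$ is controlled directly by the $\Ee_a$ norm and converges uniformly in time by energy conservation.
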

\begin{proof}
Finite propagation gives $u_a(t,s)=0$ for $s>t$. Since the displacement is locally $H^1$ and its derivative is square integrable, anchoring at this zero region gives, for $s\le t$,
\begin{equation}\label{eq:anchorpointwise}
|u_a(t,s)|^2=\left|\int_s^t u_{a,s}(t,z)\,dz\right|^2
\le(t-s)\|u_{a,s}(t)\|_2^2
\le(t-s)c_a^2\|f\|^2.
\end{equation}
The exponential potential bound therefore yields
\begin{align}
\|V_au_a(t)\|_2^2
&\le C_\ell^2 e^{-4\kappa a}c_a^2\|f\|^2
\int_{-\infty}^t e^{4\kappa s}(t-s)\,ds\notag\\
&=\frac{C_\ell^2c_a^2}{16\kappa^2}
e^{-4\kappa a}e^{4\kappa t}\|f\|^2.\label{eq:forcingbound}
\end{align}
The characteristic equation, obtained directly from \eqref{eq:scalarSchwarz}, is
$(\partial_t+\partial_s)q_a=-V_au_a/2$, with initial value $f$.
Here $V_au_a\in C([0,T];L^2)$: multiplication by $V_a$ is bounded from the displacement energy norm into $L^2$, because $\|V_au\|_2\le\|V_a\|_\infty^{1/2}\|\sqrt{V_a}u\|_2$. Thus the distributional characteristic equation has the $L^2$ Duhamel formula
\begin{equation}\label{eq:qDuhamel}
q_a(t)=V(t)f-\frac12\int_0^tV(t-r)(V_au_a(r))\,dr.
\end{equation}
It follows either by differentiating for smooth data and passing to energy limits, or by uniqueness of the forced translation equation with an $L^1_tL^2_s$ source. Unitarity and \eqref{eq:forcingbound} give
\[
\|q_a(t)-V(t)f\|_2
\le\frac{c_aC_\ell e^{-2\kappa a}}{8\kappa}
\int_0^t e^{2\kappa r}dr
=D_a(e^{2\kappa t}-1)\|f\|.
\]
This is \eqref{eq:pointcomparison}. Restriction to the shell and integration in time prove \eqref{eq:Ocomparison}. Since $c_a\le c_0$ for $a\ge0$, the stated asymptotic follows with all other parameters fixed. Lemma~\ref{lem:inverse} and Theorem~\ref{thm:exact} now prove \eqref{eq:Schwarzbounds} and \eqref{eq:Schwarzinversebound}.
\end{proof}

For example, a completely explicit sufficient condition on the apparatus translation is
\begin{equation}\label{eq:explicita}
e^{-2\kappa a}
<\frac{16\kappa^2\sqrt{A(T,L)}}{c_0 C_\ell F_\kappa(T)},
\qquad T>L.
\end{equation}
The result is also an observation inequality on the prepared energy-data subspace: the lower bound relative to $\|(u_f,f)\|_{\Ee_a}^2$ is
$(\sqrt A-h_a)^2/c_a^2$, and an upper bound is $(\sqrt B+h_a)^2/2$, by \eqref{eq:initialenergy}.

\begin{corollary}[A constructive left inverse and noisy data]\label{cor:Schwarzreconstruct}
Under the positive-bound hypotheses of Theorem~\ref{thm:Schwarzcomparison}, put
$E_a=\Oo_a-\Oo_{T,L}$ and $R_0=R_{T,L}$. Then
\begin{equation}\label{eq:Neumanninverse}
\mathcal R_a=\sum_{n=0}^{\infty}(-R_0E_a)^nR_0
\end{equation}
converges in operator norm, is a bounded left inverse of $\Oo_a$, and has norm at most $(\sqrt A-h_a)^{-1}$. It need not be the canonical inverse. For $y=\Oo_af+\eta$ with $\|\eta\|\le\delta$, either inverse has reconstruction error at most $\delta/(\sqrt A-h_a)$. For a rank-$K$ orthogonal projection $P_K$ on $X_L$,
\begin{equation}\label{eq:Schwarzfinite}
\|f-P_KR_ay\|^2
\le\|(I-P_K)f\|^2+\frac{\delta^2}{(\sqrt A-h_a)^2}.
\end{equation}
\end{corollary}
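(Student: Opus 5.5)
The plan is to treat $\Oo_a$ as a small perturbation of the exact Rindler map $\Oo_{T,L}$, using the left inverse $R_0$ supplied by Theorem~\ref{thm:exact}(iii), and to establish the four assertions in the order in which they are stated.

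\textbf{Convergence and left-inverse property.} By Theorem~\ref{thm:exact} and Lemma~\ref{lem:inverse}, $\|R_0\|=A^{-1/2}$, where $A=A(T,L)$. Theorem~\ref{thm:Schwarzcomparison} gives $\|E_a\|\le h_a<\sqrt A$. Hence $\|R_0E_a\|\le h_a/\sqrt A<1$, and the Neumann series $\sum_n(-R_0E_a)^n$ converges in $\mathcal B(X_L)$ to $(I+R_0E_a)^{-1}$. Its norm is at most $(1-h_a/\sqrt A)^{-1}$. Multiplying by $R_0$ gives
\[
\|\mathcal R_a\|\le A^{-1/2}\big(1-h_a A^{-1/2}\big)^{-1}=(\sqrt A-h_a)^{-1}.
\]
For the left-inverse property, I would use $R_0\Oo_{T,L}=I$ to write $R_0\Oo_a=I+R_0E_a$. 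Then
\[
\mathcal R_a\Oo_a=(I+R_0E_a)^{-1}R_0\Oo_a=I.
\]
For the claim that $\mathcal R_a$ need not be canonical, I would observe that $\mathcal R_a$ vanishes on $(\ran\Oo_{T,L})^\perp$. It therefore differs from $R_a$ whenever $\ran\Oo_{T,L}\ne\ran\Oo_a$. It suffices to note that nothing forces the two to coincide, so no equality is asserted. Lemma~\ref{lem:inverse} also parametrizes $\mathcal R_a$ within \eqref{eq:allleft}.

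\textbf{Noise and finite-resolution bounds.} For either inverse $\mathcal L\in\{R_a,\mathcal R_a\}$, the left-inverse property gives $\mathcal Ly-f=\mathcal L\eta$. The norm bounds $\|R_a\|\le(\sqrt A-h_a)^{-1}$ from \eqref{eq:Schwarzinversebound} and the bound just proved for $\|\mathcal R_a\|$ then give an error of at most $\delta/(\sqrt A-h_a)$. For \eqref{eq:Schwarzfinite}, I would write
\[
f-P_KR_ay=(I-P_K)f-P_KR_a\eta .
\]
The two terms lie in $\ran(I-P_K)$ and $\ran P_K$ respectively, so they are orthogonal. Pythagoras then yields the sum of squares, with $\|P_KR_a\eta\|\le\|R_a\|\delta$.

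\textbf{Main obstacle.} The main point of care is the left-inverse identity. It must be arranged so that one only inverts $I+R_0E_a$ on $X_L$, rather than requiring $\Oo_{T,L}$ to be invertible on $Y_T$. Once it is set up this way, the rest is routine.
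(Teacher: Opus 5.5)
Your proposal is correct and follows the paper's proof essentially step for step: you write $R_0\Oo_a=I+R_0E_a$ with $\|R_0E_a\|\le h_a/\sqrt A<1$, use the Neumann series for both the left-inverse identity and the norm bound $A^{-1/2}/(1-h_a/\sqrt A)$, apply either inverse to $\eta$ for the noise bound, and use orthogonality of $(I-P_K)f$ and $P_KR_a\eta$ for the projected estimate. Your kernel comparison, $\ker\mathcal R_a=(\ran\Oo_{T,L})^\perp$ versus $\ker R_a=(\ran\Oo_a)^\perp$, is a correct additional justification of the non-canonicity remark, which the paper leaves unproved.
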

\begin{proof}
We have $R_0\Oo_a=I+R_0E_a$ and $\|R_0E_a\|\le h_a/\sqrt A<1$. The Neumann series proves the inverse identity and gives
$\|\mathcal R_a\|\le A^{-1/2}/(1-h_a/\sqrt A)$. The noise bound follows by applying an inverse to $\eta$. In \eqref{eq:Schwarzfinite}, the approximation error is orthogonal to the projected noise, so Pythagoras applies.
\end{proof}

The exact tail separation in Theorem~\ref{thm:regularization} relied on free characteristics. It is not asserted here for unknown Schwarzschild data outside the prepared interval: potential scattering can couple such data into the recording region.

\begin{corollary}[Discrete Schwarzschild observations]\label{cor:Schwarzdiscrete}
Let $T=N\Delta$ and define
$\Aa_a f=(\sqrt\Delta\,q_a(k\Delta)|_I)_{k=0}^N$. Then
\begin{equation}\label{eq:discreteha}
\|\Aa_a-\Aa_{\Delta,N}\|\le
h_{a,\Delta,N}:=D_a\left(\Delta\sum_{k=0}^N
(e^{2\kappa k\Delta}-1)^2\right)^{1/2}.
\end{equation}
If \eqref{eq:coveragecondition} holds and $h_{a,\Delta,N}<\sqrt{A_{\Delta,N}}$, valid squared bounds are
$(\sqrt{A_{\Delta,N}}-h_{a,\Delta,N})^2$ and
$(\sqrt{B_{\Delta,N}}+h_{a,\Delta,N})^2$.
The same conclusion holds for any surviving set of samples after erasures, using its free coverage bounds and the sum over that surviving set in \eqref{eq:discreteha}.
\end{corollary}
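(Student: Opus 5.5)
The plan is to apply the pointwise-in-time comparison \eqref{eq:pointcomparison} of Theorem~\ref{thm:Schwarzcomparison} at each sample time $t_k=k\Delta$. We then compute the discrete operator-norm difference as a weighted $\ell^2$ sum and finish with the perturbation part of Lemma~\ref{lem:inverse} together with the exact free bounds of Theorem~\ref{thm:discrete}.

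First I would check that $\Aa_a$ is well defined and bounded on $X_L$. Each $q_a(k\Delta)$ lies in $L^2(\R)$ with norm controlled by the initial energy, via energy conservation from Lemma~\ref{lem:energycompletion} and \eqref{eq:initialenergy}. This needs the continuity $q_a\in C([0,T];L^2)$ already used in defining \eqref{eq:Oa}, so that point evaluation in time is meaningful; no time-integrated realization is needed. Next, for $f\in X_L$ the $k$-th block of $(\Aa_a-\Aa_{\Delta,N})f$ is $\sqrt\Delta\,(q_a(k\Delta)-V(k\Delta)f)|_I$. Restriction to $I$ is a contraction, so \eqref{eq:pointcomparison} with $0\le k\Delta\le N\Delta=T$ gives
\[
\|(\Aa_a-\Aa_{\Delta,N})f\|^2\le\Delta\sum_{k=0}^N D_a^2(e^{2\kappa k\Delta}-1)^2\|f\|^2 ,
\]
which is exactly \eqref{eq:discreteha}. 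The $k=0$ term vanishes, consistent with $q_a(0)=f$.

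Under \eqref{eq:coveragecondition}, Theorem~\ref{thm:discrete} supplies the optimal free bounds $A_{\Delta,N},B_{\Delta,N}$ with $A_{\Delta,N}>0$. If $h_{a,\Delta,N}<\sqrt{A_{\Delta,N}}$, then \eqref{eq:perturb} of Lemma~\ref{lem:inverse} yields the stated squared bounds $(\sqrt{A_{\Delta,N}}-h)^2$ and $(\sqrt{B_{\Delta,N}}+h)^2$. Stable reconstruction via the canonical inverse follows as before.

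For erasures, fix a surviving index set $S\subset\{0,\dots,N\}$ and consider both the free map and the Schwarzschild map restricted to the blocks in $S$. The free surviving map has Gramian $M_{w_S}$ with $w_S(z)=\Delta\sum_{k\in S}\ind_I(z+k\Delta)$. Theorem~\ref{thm:erasures} then gives its optimal bounds $\essinf w_S$ and $\esssup w_S$; these are the free coverage bounds. The same blockwise estimate, summed only over $k\in S$, bounds the difference by $D_a(\Delta\sum_{k\in S}(e^{2\kappa k\Delta}-1)^2)^{1/2}$. Lemma~\ref{lem:inverse} applies again whenever this is below $(\essinf w_S)^{1/2}$.

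There is no serious obstacle here. The only delicate point is justifying evaluation of $q_a$ at individual times in $L^2$. That is handled by the continuity in time of the energy solution, so that \eqref{eq:pointcomparison} holds for every $t$ and not merely almost every $t$.
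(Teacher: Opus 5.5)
Your proposal is correct and follows essentially the same route as the paper. Both apply \eqref{eq:pointcomparison} at each sample time $k\Delta\le T$, square and sum with weight $\Delta$, invoke the perturbation estimate of Lemma~\ref{lem:inverse} with the free bounds of Theorem~\ref{thm:discrete}, and restrict to the surviving blocks for the erasure case.
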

\begin{proof}
Apply \eqref{eq:pointcomparison} at each sample time, square and sum with weights $\Delta$. Lemma~\ref{lem:inverse} gives the transferred bounds. Restriction to a surviving set gives the last assertion.
\end{proof}

The free problem has a literal spatial fusion-frame structure because its Gramian is a coverage multiplier. The perturbed Schwarzschild map inherits a frame inequality and a bounded inverse; its Gramian need not remain a multiplier or the sum of the same spatial projections.

\subsection{The travel-time obstruction persists for Schwarzschild}
The preceding comparison gives a sufficient supercritical observation condition near the horizon. Finite propagation also gives a necessary time condition at every apparatus position, including the critical case where a simple support argument alone would be insufficient.

\begin{proposition}[Subcritical and critical obstruction]\label{prop:Schwarzcritical}
For the data and detector in \eqref{eq:anchoreddata}--\eqref{eq:Oa}, fix any $a\ge0$.
If $T<L$, then $L^2((-L,-T))\subset\ker\Oo_a$.
If $T=L$, the map $\Oo_a$ has no positive lower bound on $X_L$. In particular, $T>L$ is necessary for stable recovery for every $a$.
\end{proposition}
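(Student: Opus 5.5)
The plan is to use only the finite propagation statement of Lemma~\ref{lem:energycompletion} together with energy conservation, never the comparison estimate of Theorem~\ref{thm:Schwarzcomparison}. Both parts of Proposition~\ref{prop:Schwarzcritical} then reduce to support bookkeeping for the anchored data \eqref{eq:anchoreddata}. The key observation is that the anchoring is chosen on the outer side. If $f\in X_L$ vanishes on $(-b,0)$ for some $0<b\le L$, then $u_f(s)=\int_s^0 f=0$ for $s>-b$ and $v_f=f=0$ there. So the whole initial pair vanishes for $s>-b$, regardless of $\int f$; the constant left tail of $u_f$ lies entirely to the left of $-b$ and is irrelevant to the argument.

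For $T<L$, I take $f\in L^2((-L,-T))$, so the above holds with $b=T$. The finite propagation part of Lemma~\ref{lem:energycompletion} gives $u_a(t,s)=0$ for $s>-T+t$, $t\ge0$. For $0<t<T$ we have $-T+t<0$, so the displacement representative vanishes on a neighbourhood of $[0,d]$. Hence $u_{a,t}$ and $u_{a,s}$ vanish there as distributions, and so does $q_a$ by \eqref{eq:Oa}. Thus $\Oo_af=0$ in $Y_T$, which is the kernel inclusion.

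For $T=L$, I mimic the packet argument of Theorem~\ref{thm:exact}(ii). Take unit vectors $f_\eta$ supported in $[-L,-L+\eta]$, so the data vanish for $s>-L+\eta$. By finite propagation, $q_a(t)$ vanishes on $[0,d]$ whenever $-L+\eta+t<0$, i.e.\ for $t<L-\eta$. So only the time window $(L-\eta,L)$ contributes to $\|\Oo_af_\eta\|_{Y_T}^2$. On that window I bound pointwise in time. By the parallelogram law, $\|q_a(t)\|_2^2\le\frac12(\|u_{a,t}\|_2^2+\|u_{a,s}\|_2^2)\le\frac12\|(u_a,u_{a,t})(t)\|_{\Ee_a}^2$. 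By unitarity on $\Ee_a$ and \eqref{eq:initialenergy}, this is at most $\frac12 c_a^2$. Therefore $\|\Oo_af_\eta\|^2\le \frac12 c_a^2\,\eta\to0$ as $\eta\downarrow0$ while $\|f_\eta\|=1$, so no positive lower bound exists. Combining the two cases, stable recovery forces $T>L$ for every $a\ge0$.

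The only delicate point is that finite propagation must be applied to completed energy data, not smooth compact data. Here $u_f$ is not in $L^2$ when $\int f\ne0$. Lemma~\ref{lem:energycompletion} already covers exactly this: it proves the support statement for completed data vanishing on a half-line $s>b$, and identifies the unique locally $H^1$ representative. I would simply check that our data meet its hypothesis with $b=-T$ (respectively $b=-L+\eta$). I would also check that vanishing of the representative on an open set implies vanishing of $u_{a,t}$ and $u_{a,s}$ there, using the $C^1(\,\cdot\,;L^2)$ regularity of the velocity.
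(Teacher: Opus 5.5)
Your proposal is correct. The subcritical part is the paper's own argument, but for $T=L$ you take a genuinely different and more elementary route.

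\textbf{The paper's route.} It chooses smooth \emph{mean-zero} packets $f_\eta$, so that the anchored primitive $u_\eta$ is compactly supported with $\|u_\eta\|_2\le\eta/\pi$. It then uses the free-wave Duhamel formula and a Volterra iteration to get $\|u_a(t)\|_2\le\|u_\eta\|_2\cosh(\sqrt{\overline v_\ell}\,t)$, and feeds this into the characteristic Duhamel formula \eqref{eq:qDuhamel}. The conclusion is that $\Oo_af_\eta$ lies within $O(\eta)$ of the free observation $\Oo_{L,L}f_\eta$, whose norm is at most $\sqrt\eta$.

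\textbf{Your route.} You use finite propagation to confine the nonzero record to the window $(L-\eta,L)$. You then bound each time slice by conserved energy and \eqref{eq:initialenergy}, obtaining $\|\Oo_af_\eta\|^2\le c_a^2\eta/2$. The finite-propagation energy argument in Lemma~\ref{lem:energycompletion} already makes the whole energy density vanish on $s>b+t$. So $u_{a,t}$ and $u_{a,s}$ vanish there directly, and you need not appeal to time regularity of the velocity.

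\textbf{What each approach buys.} Your version needs no mean-zero condition, no $L^2$ displacement, no Duhamel formula and no use of $\overline v_\ell$. Its constant is uniform in $a\ge0$, since $c_a\le c_0$. It uses only unit-speed finite propagation and energy conservation, so it would apply verbatim to any nonnegative potential with those two properties. It also shows that the paper's concern that a support argument alone is insufficient at critical time is overcome simply by adding the trivial energy bound on the short window. The paper's route yields the sharper bound $\sqrt\eta+O(\eta)$. This says that near-critical Schwarzschild packets are observed like the Rindler ones up to a higher-order error, in the same comparison spirit as Theorem~\ref{thm:Schwarzcomparison}. That extra information is not needed for the obstruction itself.
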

\begin{proof}
If $f$ is supported in $(-L,-T)$, its anchored displacement vanishes for $s>-T$. Finite propagation gives $u_a(t,s)=0$ for $s>-T+t$, so the shell records zero for $0<t<T$.

At $T=L$, let $0<\eta<L$ and choose a smooth, mean-zero $f_\eta$ supported in $(-L,-L+\eta)$ with $\|f_\eta\|=1$. Its anchored primitive $u_\eta$ is supported in the same interval and satisfies the Dirichlet Poincar\'e estimate $\|u_\eta\|_2\le\eta/\pi$. The free displacement is $u^{(0)}(t,s)=u_\eta(s-t)$, since its initial velocity is $-u_\eta'=f_\eta$.

For these square-integrable data the free-wave Duhamel formula, using
$\|\sin(t|D|)/|D|\|\le t$, gives
\[
\|u_a(t)\|_2\le\|u_\eta\|_2+
\overline v_\ell\int_0^t(t-r)\|u_a(r)\|_2\,dr.
\]
Iteration of this Volterra inequality yields
$\|u_a(t)\|_2\le\|u_\eta\|_2\cosh(\sqrt{\overline v_\ell}\,t)$.
Indeed, the $n$-th iterated constant term is
$\|u_\eta\|_2\overline v_\ell^n t^{2n}/(2n)!$, and the remainder tends to zero because $\|u_a(t)\|_2$ is bounded on a compact time interval. Applying \eqref{eq:qDuhamel} therefore gives
\[
\|q_a(t)-V(t)f_\eta\|_2
\le\frac{\sqrt{\overline v_\ell}}2
\sinh(\sqrt{\overline v_\ell}\,t)\|u_\eta\|_2.
\]
At critical time Theorem~\ref{thm:exact} gives $\|\Oo_{L,L}f_\eta\|\le\sqrt\eta$. Consequently
\begin{equation}\label{eq:criticalpacket}
\|\Oo_af_\eta\|
\le\sqrt\eta+
\frac{\eta\sqrt{T\overline v_\ell}}{2\pi}
\sinh(\sqrt{\overline v_\ell}\,T)\longrightarrow0.
\end{equation}
The initial profiles have unit norm, proving failure of a lower bound. The mean-zero packets are used only as test vectors for this obstruction, not as a restriction in Theorem~\ref{thm:Schwarzcomparison}.
\end{proof}

We do not assert injectivity or characterize the range of the Schwarzschild map at critical time. The injective/nonclosed-range conclusion in Theorem~\ref{thm:exact} uses its exact free multiplier. Proposition~\ref{prop:Schwarzcritical} proves the stability obstruction without requiring that stronger classification.

\subsection{Geometric meaning of the comparison}
For completeness, the Rindler normalization can be read directly from the metric. The proper distance from the horizon on a static slice is
\[
\rho(r)=\int_{2M}^rF(\xi)^{-1/2}\,d\xi
=2\sqrt{2M(r-2M)}\big(1+O((r-2M)/M)\big).
\]
Integration of the expansion of $F^{-1/2}$ proves this formula. Hence
$F=(\kappa\rho)^2(1+O(\rho^2/M^2))$ and
\begin{equation}\label{eq:properoptical}
x=4M\log(\rho/(4M))+2M+O(\rho^2/M).
\end{equation}
The latter follows by substituting $r-2M=\rho^2/(8M)(1+O(\rho^2/M^2))$ into \eqref{eq:tortoise}. Thus, to leading order, $x$ is the Rindler optical coordinate with $\kappa=1/(4M)$. On a bounded translated $s$-interval the proper radii scale as $e^{-\kappa a}$, whereas the scalar potential scales as $e^{-2\kappa a}$.

Theorem~\ref{thm:Schwarzcomparison} is the corresponding statement for the specified observation operators, not just for metric coefficients. Its proof controls the noncompact left displacement tail and uses the exact exterior evolution. It requires fixed $M,\ell,L,d,T$. The constants grow with angular momentum and recording duration; no estimate uniform in all angular modes, or in a simultaneous long-time limit with a distant fixed detector, is claimed.

\section{Discussion and limitations}\label{sec:discussion}
The observation constants depend on the state preparation, detector geometry, time interval and data norm. Their dependence on $\kappa$ enters through the relation between radial and optical distance. In the massless $1+1$ Rindler problem, the optical equation itself is free transport for each characteristic component. The sharp Gramian is a specialization of characteristic observability in these coordinates. Rindler spacetime is flat; the horizon is represented by an infinite optical end in this coordinate description.

The Schwarzschild comparison concerns the actual curved-background dynamics. Anchored finite-energy preparation allows every outgoing profile in $X_L$, including profiles with nonzero integral. The scalar potential is integrable at the horizon end, so a constant displacement tail there has finite energy. The reconstructed family has zero incoming initial component and a fixed additive displacement; arbitrary Schwarzschild Cauchy pairs fall outside this preparation.

The reduced Killing-energy norm and the normalized characteristic-derivative observation specify the energy and measurement used in this model. They do not describe every physical sensor or establish regularity across the horizon in a nondegenerate energy norm. The discrete data consist of whole spatial profiles, so the positive discrete theorem concerns a finite collection of infinite-dimensional subspace measurements. This is consistent with the finite scalar-channel obstruction. Robustness under erased profiles is exactly the persistence of weighted spatial coverage.

At critical time the Rindler observation is injective but unstable, whereas the Schwarzschild result established here is only the failure of a positive lower bound. Above critical time, the comparison guarantees stability sufficiently near the horizon, with an explicit sufficient translation. It does not characterize the optimal Schwarzschild constants at an arbitrary location. Nor does it imply a full-line operator-norm limit of the translated propagators: the global potential norm remains unchanged.

Throughout the comparison, $M,\ell,L,d,T$ are fixed and the initial region and detector are translated together. Angular-mode summation, trapping, long-time observation at a fixed exterior detector, quasinormal-mode analysis, Kerr dynamics and quantized fields lie outside its scope. The classical reconstruction results imply neither Hawking radiation nor a statement about black-hole information. A fixed exterior detector with initial data approaching the horizon would require control on growing optical regions and increasing recording intervals.

\section{Conclusion}\label{sec:conclusion}
For the specified massless outgoing Rindler data, distributed shell recording has an exactly computable Gramian, a sharp optical travel-time threshold and an explicit canonical inverse. Uniform discrete recording is equivalent to a spatial fusion frame, with optimal bounds and sharp erasure thresholds. Finite-resolution reconstruction separates the discarded tail, approximation error and noise amplification; spectral-band time quadrature is controlled in a separate regime. A prescribed detector gain produces its own condition-number law without changing conservative propagation.

At fixed angular momentum, the actual Schwarzschild scalar equation admits a quantitative comparison on anchored finite-energy data without a zero-integral restriction. For fixed $M,\ell,L,d,T$, the observation operators differ by an explicit $O(e^{-2\kappa a})$ term. Thus, when $T>L$, sufficiently large translations toward the horizon admit stable reconstruction and a convergent perturbative left inverse. At or below the optical travel time, a positive lower bound is impossible for every apparatus translation.

\appendix
\section{Weighted multiplication and cutoff inversion}\label{app:weighted}
The following elementary comparison isolates a separate weighted inverse problem. It is not used to generate the wave dynamics.

\begin{proposition}\label{prop:weighted}
Let $\Hh=L^2(0,R)$, let $U:\Hh\to\ell^2$ be a unitary orthonormal-basis analysis, and let $w\in L^\infty(0,R)$ be positive almost everywhere. Then $C_w=UM_w$ is bounded on all of $\Hh$, with
\begin{equation}\label{eq:multiplieridentities}
\|C_w\|=\|w\|_\infty,\qquad C_w^*C_w=M_{w^2}.
\end{equation}
It is injective with dense range, and that range is closed if and only if $\essinf w>0$. In particular, for $w(x)=x^\alpha$, $\alpha>0$, its optimal squared bounds are $0$ and $R^{2\alpha}$, and its range is nonclosed.

Let $Q_\varepsilon=M_{\ind_{[\varepsilon,R]}}$ and
$m_\varepsilon=\essinf_{[\varepsilon,R]}w>0$. Let $P_K$ be an orthogonal projection onto $K$ coefficient coordinates, and write $\Pi_K=U^*P_KU$. The bounded regularized inverse
\[
R_{\varepsilon,K}=M_{1/w}Q_\varepsilon U^*P_K
\]
has norm at most $m_\varepsilon^{-1}$. For $y=C_wf+\eta$, $\|\eta\|\le\delta$,
\begin{equation}\label{eq:weightederror}
\|f-R_{\varepsilon,K}y\|
\le\|(I-Q_\varepsilon)f\|
+m_\varepsilon^{-1}\|(I-\Pi_K)(wf)\|
+m_\varepsilon^{-1}\delta.
\end{equation}
For another bounded weight $\widetilde w$,
$\|C_{\widetilde w}-C_w\|=\|\widetilde w-w\|_\infty$.
\end{proposition}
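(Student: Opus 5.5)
The whole statement reduces to the fact that $U$ is unitary, so every norm and Gramian identity for $C_w=UM_w$ is the corresponding identity for the multiplication operator $M_w$ on $L^2(0,R)$. I will prove the multiplier facts first, then the regularized inverse bound, the error estimate, and finally the weight-perturbation identity.

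For boundedness, $\|C_wf\|=\|wf\|\le\|w\|_\infty\|f\|$. Equality of norms follows from normalized indicators of sets where $w>\|w\|_\infty-\epsilon$, which have positive measure. Since $U^*U=I$, we get $C_w^*C_w=M_wU^*UM_w=M_{w^2}$. Injectivity holds because $wf=0$ a.e. together with $w>0$ a.e. forces $f=0$.

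Dense range needs more care. The adjoint is $C_w^*=M_wU^*$, and it is injective because $U^*$ is unitary and $M_w$ is injective. Since $\ker C_w^*=(\ran C_w)^\perp$, the range is dense. For closedness, note that an injective bounded operator has closed range iff it is bounded below (bounded inverse theorem, as in Theorem~\ref{thm:exact}(ii)). Being bounded below means $\essinf w^2>0$, by the same indicator test functions used for the norm.

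For $w=x^\alpha$, the essential extrema of $w^2$ on $(0,R)$ are $0$ and $R^{2\alpha}$. Hence the range is not closed. Note that this operator is not ``bounded on its range inverse''. Optimality of the bounds again comes from indicators near $0$ and near $R$.

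For $R_{\varepsilon,K}$, use $\|U^*P_K\|\le1$ and $\|Q_\varepsilon\|\le1$. Since $M_{1/w}Q_\varepsilon$ is multiplication by $\ind_{[\varepsilon,R]}/w$, whose sup is $m_\varepsilon^{-1}$, we get $\|R_{\varepsilon,K}\|\le m_\varepsilon^{-1}$. For the error, write
\[
R_{\varepsilon,K}C_wf=M_{1/w}Q_\varepsilon\Pi_K(wf).
\]
Decompose
\[
f-R_{\varepsilon,K}y=(I-Q_\varepsilon)f+\big(Q_\varepsilon f-M_{1/w}Q_\varepsilon\Pi_K(wf)\big)-R_{\varepsilon,K}\eta.
\]
Since $Q_\varepsilon f=M_{1/w}Q_\varepsilon(wf)$, the middle term equals $M_{1/w}Q_\varepsilon(I-\Pi_K)(wf)$. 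That term is bounded by $m_\varepsilon^{-1}\|(I-\Pi_K)(wf)\|$. The triangle inequality then gives \eqref{eq:weightederror}.

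For the perturbation identity, $C_{\widetilde w}-C_w=UM_{\widetilde w-w}$. Unitarity and the norm identity for multiplication operators give $\|C_{\widetilde w}-C_w\|=\|\widetilde w-w\|_\infty$; positivity of the weight is not needed for this step.

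The only delicate point is the dense-range and closed-range step. Dense range must use injectivity of the adjoint $M_wU^*$, not of $C_w$ itself. Closed range must go through the bounded-below criterion and indicator test functions. Everything else is bookkeeping.
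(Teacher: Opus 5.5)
Your proposal is correct and follows the paper's proof essentially step for step. Both use unitarity of $U$ for the norm, Gramian and perturbation identities, injectivity of the adjoint $M_wU^*$ for density of the range, the bounded-below criterion for injective operators for the closed-range equivalence, and the identity $M_{1/w}Q_\varepsilon(wf)=Q_\varepsilon f$ with the triangle inequality for \eqref{eq:weightederror}; you just spell out details the paper leaves implicit, such as the indicator test functions and the explicit decomposition of $f-R_{\varepsilon,K}y$.
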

\begin{proof}
Unitarity of $U$ proves the identities, optimal bounds and perturbation norm. Positivity almost everywhere gives injectivity of $M_w$ and zero kernel of its adjoint, hence density of its range. A positive essential infimum gives a bounded inverse; if the essential infimum is zero, normalized vectors supported where $w$ is small show that the map is not bounded below. The closed-range characterization for an injective bounded operator gives the converse. In the cutoff inverse, $M_{1/w}Q_\varepsilon$ denotes the bounded multiplier $\ind_{[\varepsilon,R]}/w$; no unbounded full inverse is being applied. Expanding $R_{\varepsilon,K}C_wf$ and using $M_{1/w}Q_\varepsilon(wf)=Q_\varepsilon f$ proves \eqref{eq:weightederror} by the triangle inequality.
\end{proof}

For a vanishing weight, algebraic inversion on the range is possible but unbounded in the inherited data norm. The cutoff operator above is a regularizer, not automatically the Moore--Penrose inverse of the unrestricted analysis. The multiplication identities explain its stability cost without supplying a geometric or thermodynamic interpretation.

\section{Finite-dimensional spectral observations and aliasing}\label{app:galerkin}
\begin{proposition}\label{prop:aliasing}
Let $H_K$ be a self-adjoint operator on a $K$-dimensional Hilbert space, with orthonormal eigenvectors $e_r$ and real eigenvalues $\omega_r$. Let $C:\C^K\to\C^J$ be bounded and put $c_r=Ce_r$. For the observation $f\mapsto(\sqrt\Delta\,Ce^{-ik\Delta H_K}f)_{k=0}^N$, the Gram matrix in column coordinates is
\begin{equation}\label{eq:finiteGram}
G_{rs}=\Delta\,c_r^*c_s\sum_{k=0}^N e^{ik\Delta(\omega_r-\omega_s)}.
\end{equation}
Its smallest and largest eigenvalues are the optimal squared observation bounds. Let $p$ be the number of distinct phases $z=e^{-i\Delta\omega_r}$ and $P_z$ the orthogonal projections onto their phase eigenspaces. If $N+1\ge p$, the observation is injective, equivalently bounded below, if and only if $C$ is injective on every $\ran P_z$.
\end{proposition}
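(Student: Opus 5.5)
The plan is to work in the eigenbasis and reduce everything to a Vandermonde argument on the phases. I would write $f=\sum_r f_re_r$, so that $Ce^{-ik\Delta H_K}f=\sum_r f_r e^{-ik\Delta\omega_r}c_r$. The squared observation norm is then
\[
\Delta\sum_{k=0}^N\Big\|\sum_r f_re^{-ik\Delta\omega_r}c_r\Big\|^2=\sum_{r,s}\overline{f_r}G_{rs}f_s ,
\]
after expanding the inner product. This identity is \eqref{eq:finiteGram}; the placement of the conjugate only fixes the convention for \emph{column coordinates}. Because $G$ is the Gram matrix of the analysis map in an orthonormal basis, it is Hermitian and positive semidefinite. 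The Rayleigh quotient characterization then identifies its extreme eigenvalues as the optimal squared bounds. In finite dimensions a map is bounded below exactly when it is injective, with lower bound $\lambda_{\min}(G)$.

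For the injectivity criterion, I would group the eigenvectors by distinct phases $z_1,\dots,z_p$ and write $g_j=CP_{z_j}f\in\C^J$. The observation then becomes $(\sqrt\Delta\sum_j z_j^kg_j)_{k=0}^N$. If $C$ fails to be injective on some $\ran P_{z_j}$, a nonzero $f\in\ran P_{z_j}$ with $Cf=0$ lies in the kernel, which gives necessity; this direction needs no assumption on $N$. For sufficiency, suppose $\sum_j z_j^kg_j=0$ for $k=0,\dots,N$. Restricting to $k=0,\dots,p-1$, which is possible since $N+1\ge p$, I apply this to each coordinate of $\C^J$. This gives a linear system whose matrix is the $p\times p$ Vandermonde matrix $(z_j^k)$ with distinct nodes, so it is invertible. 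Hence every $g_j=0$. Injectivity of $C$ on $\ran P_{z_j}$ then gives $P_{z_j}f=0$ for all $j$, and since $\sum_jP_{z_j}=I$, we get $f=0$.

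The only delicate step is bookkeeping. Distinct eigenvalues $\omega_r$ can share a phase $z$ (aliasing, when $\Delta(\omega_r-\omega_s)\in2\pi\mathbb Z$), so the grouping must be by phase rather than by eigenvalue. Each $P_z$ therefore projects onto a sum of eigenspaces, and the Vandermonde step uses only the $p$ distinct phases. I would also remark that the condition $N+1\ge p$ is used only for sufficiency.
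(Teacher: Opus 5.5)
Your proposal is correct and follows essentially the same route as the paper's proof. Both expand the squared data norm in the eigenbasis to obtain $G$, read off the optimal bounds from Rayleigh quotients, and for injectivity group by distinct phases and invert the $p\times p$ Vandermonde system from the first $p$ samples, with necessity coming from a kernel vector of $C$ inside a single phase space.
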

\begin{proof}
Expansion of the squared data norm gives \eqref{eq:finiteGram}. Its Rayleigh quotients give the bounds. If all observations vanish, then
$\sum_z z^k CP_zf=0$ for $k=0,\ldots,N$. The first $p$ equations form an invertible Vandermonde system in the $Y$-valued unknowns $CP_zf$, so each is zero. Injectivity on each phase space then gives $f=0$. Conversely, a nonzero vector in the kernel of one restricted $C$ is invisible at every time. In finite dimension injectivity is equivalent to a positive lower singular value.
\end{proof}

Aliasing occurs when $\Delta(\omega_r-\omega_s)\in2\pi\mathbb Z$. Multiple channels must distinguish the resulting phase eigenspaces. This is a finite-dimensional model with its own self-adjoint generator. Applying it as a Galerkin approximation to a PDE additionally requires an estimate for the error in the evolution and observation; such an estimate is not supplied by a mere spatial projection bound.

\section*{Data availability}
All results are analytical. No external datasets were used, and no new datasets were generated or analyzed.

\end{document}